\newtheorem{theo}{Theorem}
\newtheorem{lem}[theo]{Lemma}
\newtheorem{prop}[theo]{Proposition}
\newtheorem{rem}[theo]{Remark}
\theoremstyle{definition}
\newtheorem{example}{Example}
\newcommand{\R}{\mathbb{R}}
\newcommand{\Z}{\mathbb{Z}}
\newcommand{\N}{\mathbb{N}}
\newcommand{\Co}{\mathcal{C}}
\newcommand{\G}{\mathcal{G}}
\newcommand{\T}{\mathcal{T}}           
\newcommand{\J}{\mathcal{J}}           
\newcommand{\pj}{\mathcal{P}}          
\newcommand{\B}{\mathcal{B}}   
\newcommand{\M}{\mathcal{M}}
\newcommand{\ZZD}{\mathbb{Z}^2}
\newcommand{\E}{\mathcal{E}}
\newcommand{\F}{\mathcal{F}}
\newcommand{\Hil}{\mathcal{H}}
\newcommand{\Proj}{\mathbb{P}}
\newcommand{\vsp}{\textnormal{span}}
\newcommand{\iso}{\mathcal{T}}
\newcommand{\tr}[1]{T_{#1}}
\newcommand{\rot}[1]{R_{#1}}
\newcommand{\RR}{\R^d}         
\newcommand{\esssup}{\textnormal{ess}\!\!\sup}
\newcommand{\ol}[1]{\overline{#1}}
\newcommand{\wh}[1]{\widehat{#1}}
\title[Learning smooth subspaces]{Learning optimal smooth invariant subspaces for data approximation}
\author{D. Barbieri, C. Cabrelli, E. Hern\'andez, U. Molter}
\date{\today}
\subjclass[2020]{41A65, 43A70} 
\keywords{Invariant subspaces; data approximation; Paley-Wiener spaces; optimal subspaces}
\begin{document}

\maketitle

\allowdisplaybreaks[2]

\begin{abstract}
	 In this article, we consider the problem of approximating a finite set of data (usually huge in applications) by invariant subspaces generated through a small set of smooth functions. The invariance is  either by translations under a full-rank lattice or through the action of crystallographic groups. Smoothness is ensured by stipulating that the generators belong to a Paley-Wiener space, that is selected  in an optimal way based on the characteristics of the given data.
To complete our investigation, we analyze the fundamental role played by the lattice in the process of approximation.

\end{abstract}

\section{Introduction}

In various signal and image processing applications, we frequently encounter a specific class of signals generated by the measurement of physical phenomena, such as electrocardiograms, audio signals, radar data, medical images, and internet traffic, to name a few. These signals within each class stem from a common source, leading us to expect that they can be effectively characterized by a concise set of parameters.

Furthermore, certain classes of signals exhibit inherent invariance properties, such as translation or rotation invariance, which are particularly prominent in the case of natural images. Consequently, when seeking a simple model or subspace that closely approximates a specific class of signals, it is natural to demand that this subspace also embodies these inherent invariances.

The endeavor to learn an optimal subspace with specific properties from a dataset has been explored in various contexts. This quest was initially addressed in \cite{ACHM2007}, where it was assumed that the class of signals belonged to the space $L^2(\R^d).$ 
In this early study, an optimal 'small' subspace was constructed, which was required to be invariant under translations along a lattice. The measure of "closeness" was determined by a functional, as we will elaborate on shortly.

Subsequently, in \cite{Tessera, CM2016}, the scope was broadened to encompass cases where the approximation subspaces also possessed extra invariance, specifically involving translations on a finer lattice. This approximation problem was also considered in general Banach spaces in \cite{Cuenya}.
More recently, as demonstrated in \cite{BCHM2020} a more general scenario was considered, encompassing invariance properties like translations and rotations, such as those associated with crystallographic groups where wavelet sets were recently considered \cite{Merrill}.

In each of these cases, the existence of an optimal subspace was rigorously established, and a formula for selecting the generators was derived. Notably, the generators were demonstrated to belong to the invariant subspace generated by the data. However, these methods did not guarantee that the generators possessed special properties, such as compact support or smoothness.

The aim of the present work  is to focus on the smoothness of the generators. The idea is to develop a new strategy that allows one to obtain, in all the considered cases, optimal subspaces with smooth generators.
Here the smoothness is measured as a membership to a general Paley-Wiener subspace that will be chosen carefully. 
However, there is a price to pay in terms of the error for obtaining nice generators.
In what follows, we will give a detailed description of the problem and provide the definitions needed.

Given a set of data $\F =\{f_1,\dots,f_m\}$ in a Hilbert space $\Hil$, one wishes to minimize the error 
\begin{equation}\label{error}\
\E(\F,S) = \sum_{i = 1}^m \|f_i - \Proj_S{ f_i\|^2},
\end{equation}
over all subspaces $S \subset \Hil$ in a specific class. In this section, $\Lambda$ is a full rank lattice of $\R^d$.

\subsection{Case I : Shift-Invariant Subspaces (SIS)}
In  \cite{ACHM2007} the following problem  was solved:
Given a finite set of data $\F =\{f_1,\dots,f_m\}$ in $L^2(\R^d)$ and  a positive integer $\ell$, there always exist $\ell$ functions $\Phi=\{\varphi_1,\dots,\varphi_{\ell}\}$, called \emph{generators} of the subspace
$$
S_{\Lambda}(\Phi)=\overline{\text{span}}\{t_\lambda\varphi_i:  \lambda \in \Lambda, i=1,...,\ell\}\,,
$$
such that $S_{\Lambda}(\Phi)$ is optimal in the sense that it minimizes the error $$\E(\F,S_{\Lambda}(\Phi))) = \sum_{i = 1}^m \|f_i - \Proj_{S_{\Lambda}(\Phi)} f_i\|^2$$ over all other sets $\Psi$  in $L^2(\R^d)$ of $\ell$ generators. Here $t_\lambda \varphi(x) = \varphi (x - \lambda)$ is the translation of $\varphi$ by $\lambda$. Furthermore, the optimal generators can be chosen within $S_\Lambda(\F)$.

\subsection{Case II : SIS with extra invariance}
There are shift-invariant spaces $S_{\Lambda}(\Phi)$ in $L^2(\R^d)$ that are also invariant under translations by a closed subgroup $M$ of $\R^d$ containing $\Lambda$. Denote by $\mathcal V_M$ the collection of all these subspaces that have extra-invariance by $M$. The result in Case I was extended to this case. It was proved in  \cite{CM2016} that given a finite set of data $\F \subset L^2(\R^d)$ there exists a subspace $S_{\Lambda}(\Phi) \in \mathcal V_M$  with $\Phi = \{\phi_1, \dots, \phi_\ell\}$ such that $\E(\F, S_{\Lambda}(\Phi)) \leq \E(\F, S_{\Lambda}(\Psi))$ for all $S_{\Lambda}(\Psi) \in \mathcal V_M$  with $\Psi = \{\psi_1, \dots, \psi_\ell\} \subset L^2(\R^d).$

\subsection{Case III : $\Gamma$-Invariant Subspaces}
In order to allow for rotations or additional symmetries other than translations, the problem of approximation by subspaces that are invariant by non commutative groups is considered in \cite{BCHM2020}. There, the groups are of the form $\Gamma = \Lambda \rtimes G$, where $G$ is a finite group that acts faithfully on $\R^d$ by continuous automorphisms that preserve $\Lambda$, hence including the crystallographic groups that split.
	
The result in Case I was extended to this case in order to find the best $\Gamma$-invariant space of length $\ell$ that approximates the given data. This means that we find $\Phi = \{\phi_1,\dots,\phi_\ell\}$ such that
$$
\E(\F,S_{\Gamma}(\Phi)) = \sum_{i = 1}^m \|f_i - \Proj_{S_{\Gamma}(\Phi)} f_i\|^2 \leq \E(\F,S_{\Gamma}(\Psi)) ,
$$
for all sets $\Psi$  in $L^2(\R^d)$ of $\ell$ functions. Here $\Phi$ is the set of \emph{generators} of
$$
S_{\Gamma}(\Phi) := \overline{\text{span}}\{t_\lambda R_g\phi_i):  g \in G, \lambda \in \Lambda, i=1,...,\ell\} ,
$$
where $R_g (f)(x) = f(g^{-1}x)$ denotes the action of $g\in G$ on $f\in L^2(\R^d)$. In this case, again, the optimal generators can be chosen in $S_{\Gamma}(\F)$.

\

In this paper we will address the question if it is possible to obtain similar results, but requiring  that the approximating subspace consists of smooth functions. Precisely, for each one of the cases I, II, and III above, we ask if it is possible to find the optimal generators $\Phi = \{\phi_1, \dots, \phi_\ell\}$ among those belonging to a Paley-Wiener space $PW_\Omega :=\{f\in L^2(\R^d): \hat{f} \equiv 0  \text{ on }\R^d\setminus \Omega\}$ for a fixed bounded set $\Omega$ of prescribed measure. Throughout this paper, $\wh f$ denotes de Fourier transform of $f\in L^1(\R^d)$ defined by \vspace{-.8ex}$$ \vspace{-.8ex}\wh f (\omega) = \int_{\R^d} f(x) e^{-2\pi i \langle x \,, \, \omega\rangle} \, dx,$$ which extends to an isometric isomorphism of $ L^2(\R^d)$.

For a given $\Omega \subset \R^d$, for each one of the cases I, II, and III, we prove that optimization can be performed as follows: Project the data $\F$ onto $V=PW_\Omega$ to obtain the data $\Proj_V \F =\{\Proj_V f_1, \dots, \Proj_V f_m\}$, where $\Proj_V$ denotes the orthogonal projection of $L^2(\R^d)$ onto $V,$ and then minimize $\E(\Proj_V \F,S)$ over all $S\subseteq V$.  It turns out, and this is a general fact in Hilbert spaces, that if $S$ and $V$ are subspaces of a Hilbert space $\Hil$ and $S \subseteq V$, the error of approximating $\F$ by $S$ coincides with the error of approximating $\Proj_V \F$ by $S$ plus the error of approximating $\F$ by $V$. As a consequence, the subspaces that minimize $\E(\F,S)$ over all $S\subseteq V$ coincide with the subspaces that minimize $\E(\Proj_V \F,S)$ over the same collection. (See Theorem \ref{proj} in Section \ref{proj-best}.)
Then we prove that one of the subspaces that minimize the error can always be chosen inside the invariant subspace generated by the data  $\F$. (See Proposition \ref{eu:1} in Section \ref{proj-best}.)

The results outlined in the two previous paragraphs allow us to show the existence of invariant subspaces contained in $V=PW_\Omega$ that minimize the error of approximation of the data $\F$. (See Theorems \ref{teo:B}, \ref{teo:B1}, and \ref{teo:B2} in Section \ref{Section3}.) Observe that the elements of such a minimizing subspace are analytic functions in $\R^d$ since they belong to the Paley-Wiener space $PW_\Omega.$

One could design another strategy to solve this problem. Namely, find the invariant subspace that minimizes the error of approximating $\F$, and then project this subspace onto $V=PW_\Omega.$ It turns out that this method does not always produce a subspace that minimizes the error, as demonstrated by Example \ref{example} in Section \ref{Section3}.

We further show how, for each set of data, the set $\Omega$ that define the Paley-Wiener space, can be chosen in an optimal way. In particular, for non-abelian semidirect product groups $\Gamma = \Lambda \rtimes G$, the optimal $\Omega$ for the approximation must be invariant under the action of $G$. See Section~\ref{omega-opt}.

We then analyze the role of the lattices in our approximation problem. Let $A \in GL_n(\R)$ and denote by $|A|$ its determinant. We show that the best error of approximating a set of data $\F$ by $A\Lambda$-invariant subspaces of length at most $\ell$ coincides with the best error of approximation of the dilated data $D_A \F =\{D_A f_1, \dots, D_A f_m\}$ by $\Lambda$-invariant subspaces of length at most $\ell$. (See Proposition \ref{Lattices} in Section \ref{lattices}.) Here $D_A f(x) = |A|^\frac12 f(Ax)$.

We next compare the best error of approximation of the same set of data $\F$ using different lattices. Denote by $\E^*(\F, \Lambda, \ell)$ the best error of approximation of $\F$ by $\Lambda$-invariant subspaces of $\R^d$ of length less than or equal to $\ell$.  We prove that $\E^*(\F, \Lambda, \ell) \leq \E^*(\F, \Z^d, \ell)$ when $\Lambda$ is a lattice of $\R^d$ containing $\Z^d$. We show that this can be an equality even if $\Lambda$ strictly contains $\Z^d$, while the reverse inequality could also be true for a lattice $\Lambda$ that is finer than $\Z^d$ but does not contain it. (See Examples \ref{example1}, \ref{example2} and \ref{example3}, Section 6.) 

We finally analyze how the length of $S_\Lambda(\F)$ affects the approximation error. 
We exhibit a set of data $\F$ and two latices $\Lambda_1$ and $\Lambda_2$ such that the length of $S_{\Lambda_1}(\F)$ is smaller than the length of $S_{\Lambda_2}(\F)$ and also the approximation error with $\ell$ functions is smaller for $\Lambda_1$ than for $\Lambda_2.$ (See Example \ref{example4} in Section 6.)
This example induces to think that the smaller the length of $S_{\Lambda}(\F)$, the smaller the best error of approximation will be. However, Example \ref{example5} disproves that.

\

\noindent
{\bf Acknowledgments}: This project has received funding from the European Union's Horizon 2020 research and innovation programme under the Marie Sklodowska-Curie Grant Agreement No 777822. In addition, D. Barbieri and E. Hern\'andez were
supported by Grants PID2019-105599GB-I00, PID2022-142202NB-I00 / AEI / 10.13039/501100011033.
The research of C. Cabrelli and U. Molter is partially supported by Grants
PICT 2018-003399 and 2022-4875 (ANPCyT), 
PIP 11220150100355 and 202287/22 (CONICET),
UBACyT 20020170100430BA and 20020220300154BA (UBA).

\section{Projection theorems} \label{proj-best}

By a subspace $V$ of a Hilbert space, we will mean a {\it closed subspace}, and by $\Proj_V$ we denote the orthogonal projection onto $V$.

We start with the following theorem.
\begin{theo}\label{proj}
Let $\Hil$ be a Hilbert space, $\F =\{f_1,\dots,f_m\} \subset \Hil$ and let $V \subseteq \Hil$ be a fixed subspace.
Given any subspace $S \subseteq V$  we have 
\begin{equation}\label{uno-0}
\E(\F,S)=\E(\Proj_V\F,S) + \E(\F,V).
\end{equation}
As a consequence, for all subspaces $S, S' \subset V$,
$$
\E(\F,S) \leq \E(\F,S') \iff \E(\Proj_V\F,S) \leq \E(\Proj_V\F,S').
$$
\end{theo}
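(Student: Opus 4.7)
The plan is to establish the identity \eqref{uno-0} pointwise, i.e.\ for each datum $f_i$, and then sum over $i$. The key algebraic observation is that since $S \subseteq V$, the orthogonal projections satisfy $\Proj_S \Proj_V = \Proj_S$: indeed, for any $f \in \Hil$, the vector $f - \Proj_V f$ lies in $V^\perp \subseteq S^\perp$, so $\Proj_S(f - \Proj_V f) = 0$, yielding $\Proj_S f = \Proj_S \Proj_V f$.

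Given this, I would write the decomposition
\begin{equation*}
f_i - \Proj_S f_i \;=\; (f_i - \Proj_V f_i) \;+\; (\Proj_V f_i - \Proj_S f_i),
\end{equation*}
where I have used $\Proj_S f_i = \Proj_S \Proj_V f_i$ in the second term. The first summand lies in $V^\perp$, while the second lies in $V$ (both $\Proj_V f_i$ and $\Proj_S f_i$ belong to $V$ because $S \subseteq V$). Hence the two summands are orthogonal, and Pythagoras' theorem gives
\begin{equation*}
\|f_i - \Proj_S f_i\|^2 = \|f_i - \Proj_V f_i\|^2 + \|\Proj_V f_i - \Proj_S \Proj_V f_i\|^2.
\end{equation*}
Summing over $i = 1, \dots, m$ and recognizing the right-hand side as $\E(\F, V) + \E(\Proj_V \F, S)$ yields \eqref{uno-0}.

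For the consequence, I would simply observe that $\E(\F, V)$ does not depend on $S$, so for $S, S' \subseteq V$,
\begin{equation*}
\E(\F, S) - \E(\F, S') = \E(\Proj_V \F, S) - \E(\Proj_V \F, S'),
\end{equation*}
from which the claimed equivalence of inequalities follows immediately.

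The only step requiring any care is the identity $\Proj_S \Proj_V = \Proj_S$ together with the orthogonality of the two summands; everything else is a direct application of Pythagoras and summation. I do not anticipate any real obstacle here, as the result is a clean Hilbert-space decomposition and the hypothesis $S \subseteq V$ is exactly what is needed to make the chain of projections collapse correctly.
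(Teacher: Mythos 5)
Your proof is correct and follows essentially the same route as the paper: both decompose $f_i - \Proj_S f_i$ into its $V$ and $V^\perp$ components using $\Proj_S\Proj_V = \Proj_S$ (a consequence of $S \subseteq V$) and apply Pythagoras, with the equivalence then following because $\E(\F,V)$ is independent of $S$. Your explicit verification of the projection identity is a minor elaboration of a step the paper states without proof.
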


\begin{proof}
Suppose $S \subset V$. Then $\Proj_V \Proj_S = \Proj_S = \Proj_S \Proj_V$ and $\Proj_{V^\perp} \Proj_S = 0$. Hence, for every $f \in \Hil$
\begin{align*}
\|f - \Proj_S f\|^2 & = \|\Proj_V (f - \Proj_S f)\|^2 + \|\Proj_{V^\perp} (f - \Proj_S f)\|^2\\
& = \|\Proj_V f - \Proj_S \Proj_V f\|^2 + \|\Proj_{V^\perp} f \|^2.
\end{align*}
Thus, $\E(\F,S) = \displaystyle\sum_{i = 1}^m \|f_i - \Proj_{V} f_i \|^2 + \E({\Proj_V\F},S)$, which implies \eqref{uno-0}. The last part follow immediately from \eqref{uno-0}.
\end{proof}

We will need sometimes the following elementary Lemma about a general fact on Hilbert spaces.
\begin{lem}\label{op-commute}
	Let $V$ be a subspace of a Hilbert space $\Hil$ and let $U$ be a unitary operator in $\Hil$. Then
	
	i) $U \Proj_V =  \Proj_{U(V)} U$
	
	ii) $\Proj_V U = U\Proj_V \iff U (V) = V.$
\end{lem}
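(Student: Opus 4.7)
The plan is to prove (i) first by directly invoking the uniqueness of the orthogonal decomposition, and then deduce (ii) from (i).

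For part (i), I would fix $f \in \Hil$ and write its decomposition with respect to $V$ as $f = \Proj_V f + \Proj_{V^\perp} f$. Applying $U$ gives
\[
Uf = U\Proj_V f + U\Proj_{V^\perp} f.
\]
The first term lies in $U(V)$. The key observation is that, because $U$ is unitary, it preserves inner products, so $U(V^\perp) = U(V)^\perp$; hence the second term lies in $U(V)^\perp$. Since $U(V)$ is still a closed subspace (as the image of a closed subspace under a unitary), the orthogonal decomposition of $Uf$ with respect to $U(V)$ is unique, and we can read off $\Proj_{U(V)} Uf = U\Proj_V f$. This holds for every $f$, giving the operator identity $\Proj_{U(V)} U = U \Proj_V$.

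For part (ii), the direction ($\Leftarrow$) is immediate: if $U(V) = V$, then by (i) we have $U\Proj_V = \Proj_{U(V)} U = \Proj_V U$. For ($\Rightarrow$), suppose $\Proj_V U = U \Proj_V$. Combining with part (i), this yields
\[
\Proj_V U = U \Proj_V = \Proj_{U(V)} U.
\]
Since $U$ is a bijection on $\Hil$, right-cancelling $U$ gives $\Proj_V = \Proj_{U(V)}$, and since a closed subspace is uniquely determined by its orthogonal projection (namely, as its range), we conclude $V = U(V)$.

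No serious obstacle is expected here; the only point requiring a small argument is the identity $U(V^\perp) = U(V)^\perp$, which follows at once from the fact that $\langle Ux, Uy\rangle = \langle x, y\rangle$ and the surjectivity of $U$.
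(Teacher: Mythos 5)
Your proof is correct, but for part (i) it takes a genuinely different route from the paper. The paper argues algebraically: it forms the operator $T := U\Proj_V U^*$, checks $T^2 = T$ and $T^* = T$ so that $T$ is an orthogonal projection, identifies its range as $U(V)$, and concludes $U\Proj_V U^* = \Proj_{U(V)}$, which is (i) after multiplying by $U$ on the right. You instead argue vector by vector: you push the orthogonal decomposition $f = \Proj_V f + \Proj_{V^\perp}f$ through $U$, use $U(V^\perp) = U(V)^\perp$ (which you correctly flag as needing both the preservation of inner products and surjectivity), and invoke uniqueness of the orthogonal decomposition with respect to $U(V)$. Both arguments are complete; the paper's is shorter and purely operator-theoretic, hiding the geometry inside the characterization ``self-adjoint idempotent $=$ orthogonal projection,'' while yours is more elementary and makes explicit exactly where unitarity enters (namely in $U(V^\perp) = U(V)^\perp$ and in $U(V)$ being closed). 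For part (ii) the two proofs are essentially identical: both directions reduce to (i), cancellation of the invertible $U$, and the fact that a closed subspace is determined by its orthogonal projection.
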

\begin{proof}
	i) The operator $T:= U \Proj_V U^*$ satisfies $T^2= T$ and $T^*=T$. Therefore, $T$ is an orthogonal projection. The result follows since the range of $T$ is $U(V).$
	
	ii) Using i) and that $U$ is an isometry we obtain:
\[
\Proj_V U = U\Proj_V \iff \Proj_V U = \Proj_{U(V)}U  \iff \Proj_V =\Proj_{U(V)} \iff V= U(V). \qedhere
\]
\end{proof}

If $\Hil$ is infinite dimensional, and $\F$ is a finite set of data, in \cite{ACHM2007} the following proposition is proved. 
\begin{prop}
Let $\F=\{f_1,...,f_m\} \subseteq \Hil$. For an arbitrary subspace $M\subseteq \Hil$, there always exists a subspace $W\subseteq \text{span}\{\F\}$ such that 
dim$(W) \leq \text{ dim}(M),$ and
\begin{equation}\label{dos}
\E(\F,W) \leq \E(\F, M).
\end{equation}
\end{prop}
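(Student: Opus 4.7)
The plan is to exhibit $W$ explicitly as the projection of $M$ onto the span of the data. Let $V = \textnormal{span}\{\F\}$, which is a finite-dimensional (hence closed) subspace of $\Hil$ of dimension at most $m$. Define
\[
W := \Proj_V(M) = \{\Proj_V h : h \in M\} \subseteq V.
\]
Since $V$ is finite-dimensional, so is $W$, and therefore $W$ is automatically closed. Moreover $\Proj_V$ restricted to $M$ is a linear surjection onto $W$, so $\dim(W) \leq \dim(M)$.

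The main estimate is pointwise on the data. Fix $f_i \in \F$; since $f_i \in V$, for every $h \in M$ the Pythagorean identity applied to the decomposition $h = \Proj_V h + \Proj_{V^\perp} h$ gives
\[
\|f_i - h\|^2 = \|f_i - \Proj_V h\|^2 + \|\Proj_{V^\perp} h\|^2 \geq \|f_i - \Proj_V h\|^2.
\]
Apply this with the optimal choice $h := \Proj_M f_i \in M$. Then $\Proj_V h \in W$ by the definition of $W$, and since $\Proj_W f_i$ is the best approximation of $f_i$ from $W$,
\[
\|f_i - \Proj_W f_i\|^2 \leq \|f_i - \Proj_V h\|^2 \leq \|f_i - h\|^2 = \|f_i - \Proj_M f_i\|^2.
\]
Summing over $i = 1,\ldots,m$ yields $\E(\F, W) \leq \E(\F, M)$, which is the desired inequality.

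There is no serious obstacle here: the construction is forced once one notices that $\F \subseteq V$ allows a one-step reduction via Pythagoras, bringing $M$ inside $V$ without increasing dimension or worsening the approximation error. The only point that needs a quick sanity check is that $W$ is indeed a (closed) subspace and that $\dim W \leq \dim M$, both of which follow at once because $V$ is finite-dimensional and $W$ is the image of $M$ under a linear map.
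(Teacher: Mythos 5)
Your proof is correct and is essentially the standard argument: the paper itself quotes this proposition from \cite{ACHM2007} without reproving it, but its proof of the $\Gamma$-invariant analogue (Proposition \ref{eu:1}) follows exactly your scheme --- replace $M$ by its projection onto the (closed, here finite-dimensional) subspace generated by the data, then use that $\Proj_V$ fixes each $f_i$ and is a contraction, which is precisely your Pythagoras step. Nothing is missing.
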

We need here an extension of this property to the case that we want to approximate our data with shift, or $\Gamma$-invariant subspaces. 
Here $\Gamma =  \Lambda \rtimes G$ is the semidirect product of a full rank lattice $\Lambda$ in $\RR$ and a discrete and countable group $G$ that acts  on $\RR$ by continuous invertible automorphisms.
In addition, we also assume that $g\Lambda = \Lambda$ for all
$g\in G$, which implies that the Lebesgue measure of $\RR$ is invariant under the action of $G$.
Since shift-invariant spaces are in particular $\Gamma$-invariant for $\Gamma$ the group of translations on $\Lambda$, we state (and prove) the proposition for that case.

\begin{prop} \label{eu:1} Let $\Gamma$ be $\Gamma =  \Lambda \rtimes G$, and let $\Co_{\ell}(\R^d)$ be the class of $\Gamma$-invariant subspaces of $L^2(\R^d)$ of length at most $\ell$.
Let $\F=\{f_1,...,f_m\} \subseteq L^2(\R^d)$. For an arbitrary subspace $M\in \Co_{\ell}(\R^d)$, there always exists a subspace $W\subseteq S_{\Gamma}(\F)$ and $W\in  \Co_{\ell}(\R^d)$ such that 
\begin{equation}\label{dos2}
\E(\F,W) \leq \E(\F, M).
\end{equation}
\end{prop}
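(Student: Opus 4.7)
The plan is to project a set of generators of $M$ onto $V := S_\Gamma(\F)$ and to let $W$ be the $\Gamma$-invariant subspace they span. Writing $M = S_\Gamma(\Psi)$ with $\Psi = \{\psi_1,\dots,\psi_\ell\}$ (padding with zeros if $M$ has length strictly less than $\ell$), I set
$$W := S_\Gamma(\{\Proj_V \psi_1,\dots,\Proj_V \psi_\ell\}).$$
By construction $W \in \Co_\ell(\R^d)$ and $W \subseteq V = S_\Gamma(\F)$, so only the error bound remains to be checked.

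The key step is to show $\Proj_V(M) \subseteq W$, and this is where Lemma~\ref{op-commute} enters. If $\Proj_V$ commutes with every $\gamma \in \Gamma$, then $\Proj_V(\gamma \psi_i) = \gamma \Proj_V \psi_i \in W$, and by linearity, continuity of $\Proj_V$, and closedness of $W$, the inclusion extends to all of $M$. The commutation follows from Lemma~\ref{op-commute}(ii) since $V$ is $\Gamma$-invariant, provided each $\gamma \in \Gamma$ acts as a unitary on $L^2(\R^d)$. Translations $t_\lambda$ are clearly unitary, and the operators $R_g$, $g \in G$, are unitary because the standing hypothesis $g\Lambda = \Lambda$ forces $G$ to preserve the Lebesgue measure of $\R^d$.

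With $\Proj_V(M) \subseteq W$ in hand, the error bound is one line. For each $f_i$, the element $w_i := \Proj_V \Proj_M f_i$ lies in $W$; since $f_i \in V$ we have $\Proj_V f_i = f_i$, so
$$\|f_i - \Proj_W f_i\|^2 \leq \|f_i - w_i\|^2 = \|\Proj_V(f_i - \Proj_M f_i)\|^2 \leq \|f_i - \Proj_M f_i\|^2.$$
Summing over $i = 1, \dots, m$ yields $\E(\F, W) \leq \E(\F, M)$.

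The main obstacle is the commutation $\Proj_V \gamma = \gamma \Proj_V$ for $\gamma \in \Gamma$; once this is established via Lemma~\ref{op-commute}, everything else follows from non-expansiveness of orthogonal projection and the fact that the data lies in $V$. The subtle point is the unitarity of $R_g$, which relies on the paper's standing assumption $g\Lambda = \Lambda$ and the resulting invariance of the Lebesgue measure under $G$.
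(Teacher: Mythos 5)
Your proposal is correct and follows essentially the same route as the paper's proof: both project the generators of $M$ onto $S_{\Gamma}(\F)$, invoke Lemma~\ref{op-commute} (via $\Gamma$-invariance of $S_\Gamma(\F)$ and unitarity of the $t_\lambda R_g$) to get $\Proj_{S_\Gamma(\F)}\Proj_M f_i \in W$, and then conclude from $f_i = \Proj_{S_\Gamma(\F)} f_i$ together with non-expansiveness of the projection. Your write-up merely makes explicit a couple of points the paper leaves implicit (the passage from generators to the closed span, and why $R_g$ is unitary).
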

\begin{rem} The above Proposition shows that the $\Gamma$-invariant subspace of $L^2(\R^d)$ generated by at most $\ell$ elements that best approximates the data $\F$, can be taken to be generated by elements of $S_{\Gamma}(\F)$.
\end{rem}
\begin{proof}[{\bf Proof of Proposition \ref{eu:1}:}]
Let $X  = S_{\Gamma}(\F):= \ol{\vsp}\{\tr{k}\rot{g}f \, : \, k \in \Lambda, g \in G, f \in \F\}$. Let $\Psi= \{\psi_1, \dots, \psi_r\}, (r \leq \ell)$ be such that $M = S_{\Gamma}(\Psi)$. Define $\varphi_i = \Proj_X\psi_i, i=1\dots r$, and $\Phi := \{\varphi_1,\dots, \varphi_r\}$.  
For $j = 1, \dots, m$
\begin{equation}
\|f_j - \Proj_{S_{\Gamma}(\Phi)}f_j\| = \inf_{g \in S_{\Gamma}(\Phi)}\|f - g\|. \label{eu-1}
\end{equation}
Also note that since $X$ is $\Gamma$-invariant, $\Proj_X (\tr{k}\rot{g}) = (\tr{k}\rot{g})\Proj_X, \forall (k,g) \in \Gamma$ as a consequence of Lemma \ref{op-commute}, and hence $\Proj_X \Proj_{S_{\Gamma}(\Psi)}f_j \in S_{\Gamma}(\Phi) \ \forall \ j=1,\dots, m$.

Therefore, since $f_j \in \F$, by equation~\eqref{eu-1},
$$ \|f_j - \Proj_{S_{\Gamma}(\Phi)}f_j\|\leq \|f_j - \Proj_X\Proj_{S_{\Gamma}(\Psi)}f_j\| = \|\Proj_Xf_j - \Proj_X\Proj_{S_{\Gamma}(\Psi)}f_j\| \leq \|f_j - \Proj_{S_{\Gamma}(\Psi)}f_j\|.$$
Thus, with $W = S_{\Gamma}(\Phi) \in \Co_{\ell}$,
\begin{align*}
\E(\F,W) & = \E(\F,S_{\Gamma}(\Phi)) = \sum_{j=1}^m \|f_j - \Proj_{S_{\Gamma}(\Phi)}f_j\|^2\\ &\leq \sum_{j=1}^m \|f_j - \Proj_{S_{\Gamma}(\Psi)}f_j\|^2 = \E(\F,S_{\Gamma}(\Psi)) = \E(\F,M). \qedhere
\end{align*}
\end{proof}

We conclude this section by observing that enlarging the search space for the minimum with directions that are orthogonal to the data does not reduce the error of approximation.
\begin{prop}\label{prop:useless}  Let $S, V,$ and $W$ be subspaces of $\Hil$. Suppose $V \perp W,\  S\perp W$  and let $S' = S \oplus W$. Then
$$\E(\Proj_V\F,S)=\E({\Proj_V\F},S').$$
\end{prop}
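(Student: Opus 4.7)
The plan is to reduce the equality term-by-term to a single identity, namely $\Proj_{S'} \Proj_V f = \Proj_S \Proj_V f$ for every $f\in\Hil$, and then sum the squared norms. Since the error functional $\E(\Proj_V \F, \cdot)$ is just a sum over the data of squared distances from $\Proj_V f_i$ to the relevant subspace, showing the two projections agree on all of $V$ will be enough.

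First I would use the hypothesis $S \perp W$ to decompose the projection onto $S'=S\oplus W$. Orthogonality of the two summands gives $\Proj_{S'} = \Proj_S + \Proj_W$, since the right-hand side is self-adjoint, idempotent, and has range exactly $S\oplus W$. Next I would invoke the hypothesis $V \perp W$ to conclude $\Proj_W \Proj_V = 0$: indeed, $\Proj_V$ maps $\Hil$ into $V\subseteq W^\perp$, and $\Proj_W$ annihilates $W^\perp$. Combining these two facts yields $\Proj_{S'} \Proj_V = \Proj_S \Proj_V$ on all of $\Hil$.

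Applying this identity to each datum $f_i$ gives
$$
\|\Proj_V f_i - \Proj_{S'} \Proj_V f_i\|^2 = \|\Proj_V f_i - \Proj_S \Proj_V f_i\|^2,
$$
and summing from $i=1$ to $m$ delivers $\E(\Proj_V \F, S') = \E(\Proj_V \F, S)$.

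There is no real obstacle here: the statement is essentially a rephrasing of the fact that directions orthogonal to the data carry no information for the approximation. The only thing to be careful about is justifying the decomposition $\Proj_{S\oplus W}=\Proj_S+\Proj_W$ (which requires $S\perp W$, as assumed), and the vanishing $\Proj_W\Proj_V=0$ (which requires $V\perp W$, also assumed). Both follow at once from Lemma~\ref{op-commute}-style manipulations or directly from the defining properties of orthogonal projections, so the whole argument is essentially one short computation.
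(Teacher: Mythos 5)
Your proof is correct and follows essentially the same route as the paper's: decompose $\Proj_{S'}=\Proj_S+\Proj_W$ using $S\perp W$, kill the $\Proj_W\Proj_V$ term using $V\perp W$, and sum over the data. You simply spell out the justifications that the paper leaves implicit.
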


\begin{proof}
\begin{align*}
\E({\Proj_V\F},S') & = \sum_{i = 1}^m \|\Proj_V f_i - \Proj_{S'} \Proj_V f_i\|^2 = \sum_{i = 1}^m \|\Proj_V f_i - (\Proj_{S} + \Proj_{W}) \Proj_V f_i\|^2\\
& = \sum_{i = 1}^m \|\Proj_V f_i - \Proj_{S} \Proj_V f_i\|^2 = \E({\Proj_V\F},S). \qedhere
\end{align*}
\end{proof}

\section{Smooth generators} \label{Section3}

\subsection{Approximation by Shift-Invariant Spaces} In this subsection we will consider the case in which we want to approximate our data with a shift-invariant space generated by  ''smooth '' functions. For this, let $\Omega \subset \R^d$ be a bounded measurable set. We will later see how to choose a convenient set $\Omega$ depending of the data $\F.$
Throughout this section we will fix a full rank lattice $\Lambda$ of $\R^d$ and  the shift-invariant spaces are invariant along translations by $\Lambda$. Thus we will omit the subscript $\Lambda$, and simply write $S(\Phi)$ instead of $S_\Lambda(\Phi).$

Let $V = PW_{\Omega}$, be the generalized Paley-Wiener space associated to $\Omega.$ i.e. $PW_\Omega=\{f\in L^2(\R^d): \hat{f} \equiv 0  \text{ on }\R^d\setminus \Omega\}.$
For a positive integer $\ell$, we denote by $\Co_\ell(\R^d) $ the class of {\it shift-invariant subspaces} of length at most $\ell$, 
and by $\Co_\ell(\Omega)$ the elements of $\Co_\ell(\R^d)$ included in $V$. We recall that the {\it length} of a shift-invariant subspace is the minimal number of generators of the subspace.

In \cite{ACHM2007} the existence of a best shift-invariant space for given data was proven. If we want to apply the solution of that paper, we cannot guarantee that the generators of the optimal subspace are smooth, i.e. that they belong to $V$.
However, using Theorem~\ref{proj}, we will see that, if instead we solve the theorem of approximation for the data $\Proj_V \F$, instead of $\F$, this yields the solution sought after. That is, the optimal subspace for the data $\Proj_V \F$ is included in $V$
and minimizes the error. 

We have the following Theorem.
\begin{theo}\label{teo:B}
Given data $\F = \{f_1, \dots, f_m\} \subset L^2(\R^d)$, and $\Omega \subset \R^d$, there exists a shift-invariant space 
$S(\Phi)\in \Co_\ell(\Omega)$ such that $\E(\F,S(\Phi)) \leq \E(\F,S(\Psi))$ for all sets $\Psi=\{\psi_1,\dots,\psi_\ell\} \subset PW_{\Omega}$.
\end{theo}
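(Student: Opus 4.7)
The plan is to combine the projection identity (Theorem \ref{proj}) with the existing shift-invariant existence result of \cite{ACHM2007} (Case I), applied not to $\F$ itself but to the projected data $\Proj_V\F$, where $V = PW_\Omega$.

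First I would record the key structural fact that the Paley-Wiener space $V = PW_\Omega$ is itself shift-invariant along $\Lambda$: for $\lambda\in\Lambda$ and $f\in PW_\Omega$, $\widehat{t_\lambda f}(\omega) = e^{-2\pi i\langle\lambda,\omega\rangle}\hat f(\omega)$, which still vanishes on $\R^d\setminus\Omega$. Consequently, for any $\Psi=\{\psi_1,\dots,\psi_\ell\}\subset PW_\Omega$, the closed span $S(\Psi)$ is contained in $V$, i.e.\ $S(\Psi)\in\Co_\ell(\Omega)$. So the quantifier in the statement really ranges over the class $\Co_\ell(\Omega)$, and it suffices to produce a minimizer of $\E(\F,\cdot)$ within that class.

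Next I would apply the Case I existence result of \cite{ACHM2007} to the projected data $\Proj_V\F=\{\Proj_V f_1,\dots,\Proj_V f_m\}$, obtaining a shift-invariant space $S(\Phi)$ of length at most $\ell$ that minimizes $\E(\Proj_V\F,\cdot)$ over all shift-invariant subspaces of length at most $\ell$ in $L^2(\R^d)$. Using Proposition \ref{eu:1} with $G=\{\id\}$ (so $\Gamma=\Lambda$), I may assume $\Phi\subset S_\Lambda(\Proj_V\F)$. Since $V$ is closed and shift-invariant and contains each $\Proj_V f_i$, it contains $S_\Lambda(\Proj_V\F)$; therefore $S(\Phi)\subseteq V$, i.e.\ $S(\Phi)\in\Co_\ell(\Omega)$.

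Finally, Theorem \ref{proj} converts optimality: since $S(\Phi)$ and every candidate $S(\Psi)\in\Co_\ell(\Omega)$ lie in $V$,
\[
\E(\F,S(\Phi)) - \E(\F,S(\Psi)) \;=\; \E(\Proj_V\F,S(\Phi)) - \E(\Proj_V\F,S(\Psi)) \;\le\; 0,
\]
the last inequality by the optimality of $S(\Phi)$ for the projected data. This gives the theorem. There is no real obstacle; the argument is a clean assembly of already-established pieces, and the only point meriting a line of care is the inclusion $S_\Lambda(\Proj_V\F)\subseteq V$, which relies precisely on $V$ being closed and shift-invariant.
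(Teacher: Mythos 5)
Your proposal is correct and follows essentially the same route as the paper's proof: apply the existence result for shift-invariant approximation to the projected data $\Proj_V\F$, use Proposition \ref{eu:1} to place the optimal subspace inside $S_\Lambda(\Proj_V\F)\subseteq V$, and transfer optimality back to $\F$ via Theorem \ref{proj}. The only cosmetic difference is that you justify $S_\Lambda(\Proj_V\F)\subseteq V$ directly from the closedness and shift-invariance of $PW_\Omega$, while the paper phrases it through the identity $S(\Proj_V\F)=\Proj_V S(\F)$ obtained from Lemma \ref{op-commute}.
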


\begin{proof}[Proof of Theorem~\ref{teo:B}]
Since any Paley Wiener space is translation invariant for any translation, part $ii)$ of Lemma~\ref{op-commute} shows that $\Proj_V$ commutes with any translation, which in particular implies that the orthogonal projection of any subspace $S(\Psi)$ of 
$\Co_\ell(\R^d)$  onto $V$ is in $\Co_\ell(\Omega)$ and $\Proj_V S(\Psi)=S(\Proj_V(\Psi))$.
	
Let now $S^*=S^*(\Phi)$ be an optimal subspace that approximates the projected data  $\Proj_V\F$ with elements of $\Co_\ell(\R^d)$. By Proposition ~\ref{eu:1} we can choose $S^*\subseteq S (\Proj_V\F) = \Proj_V S(\F) \subseteq V$. Since $S^*$ minimizes $\E(\Proj_V\F,S)$ over all $S \in \Co_\ell(\R^d)$, then in particular
$S^*$ minimizes $\E(\Proj_V\F,S)$ over all $S \in \Co_\ell(\Omega).$
That is,
$$
\E(\Proj_V \F, S^*) = \underset{S \in \Co_\ell(\R^d)} {\operatorname{min}} \, \E(\Proj_V F,S) = \underset{S \in \Co_\ell(\Omega)} {\operatorname{min}} \, \E(\Proj_V F,S).
$$
Therefore, by Theorem~\ref{proj},
\begin{align*}
	\E(\F,S^*) &= \E(\Proj_V \F,S^*) + \E(\F,V)\\
	&= \underset{S \in \Co_\ell(\Omega)} {\operatorname{min}} \, \E(\Proj_V F,S) + \E(\F,V)
	= \underset{S \in \Co_\ell(\Omega)} {\operatorname{min}} \E(\F,S)\,,
\end{align*}
which proves the result.
\end{proof}

\begin{rem}
For a class of subspaces $\Co$ of $L^2(\R^d)$ denote by $ \displaystyle \underset{S \in \Co} {\operatorname{argmin}} \, \E(\F,S)$ the set of all subspaces $A\in \Co$ such that $\displaystyle \E(\F,A) = \underset{S \in \Co} {\operatorname{min}} \, \E(\F,S)$. Then we show the following:
\begin{equation}\label{eq:minimizing2}
	\Big(\underset{S \in \Co_{\ell}(\R^d)} {\operatorname{argmin}} \,\E(\Proj_V \F,S)\Big) \cap \Co_\ell(\Omega)  =
	\underset{S \in \Co_\ell(\Omega)} {\operatorname{argmin}} \, \E(\Proj_V \F,S) = 
	\underset{S \in \Co_\ell(\Omega)} {\operatorname{argmin}} \, \E(\F,S)\,.
\end{equation}
The second identity holds by the second part of Theorem \ref{proj}. For the first identity, we first prove the inclusion $\subseteq$. If $A$ is in the left hand set of \eqref{eq:minimizing2}, then $A\in \Co_\ell(\Omega)$ and 
$$ 
\E(\Proj_V \F, A) = \underset{S \in \Co_{\ell}(\R^d)} {\operatorname{min}} \,\E(\Proj_V \F,S) \leq \underset{S \in \Co_{\ell}(\Omega)} {\operatorname{min}} \,\E(\Proj_V \F,S)  \leq \E(\Proj_V \F, A).
$$
This shows that $\displaystyle A\in \underset{S \in \Co_\ell(\Omega)} {\operatorname{argmin}}\, \E(\Proj_V \F,S)$.

Let us now prove the opposite inclusion, that is, let us prove that if $S^* \in \underset{S \in \Co_{\ell}(\Omega)} {\operatorname{argmin}} \,\E(\Proj_V \F,S) $ then $S^* \in \Big(\underset{S \in \Co_{\ell}(\R^d)} {\operatorname{argmin}} \,\E(\Proj_V \F,S)\Big) \cap \Co_\ell(\Omega)$. To this end assume, by contradiction, that there is an $\tilde{S} \in \Co_{\ell}(\R^d)$ such that $\E(\Proj_V \F, \tilde{S}) < \E(\Proj_V\F,S^*)$. Thus, by Proposition \ref{eu:1}, there is a $W \in \Co_{\ell}(\Omega)$ such that $\E(\Proj_V \F, W) \leq \E(\Proj_V\F,\tilde{S}) < \E(\Proj_V\F,S^*)$, but by definition of $S^*$ we have that $\E(\Proj_V \F, S^*) \leq \E(\Proj_V \F, W)$.

Note that the first identity in \eqref{eq:minimizing2} would not hold in general if we remove from the left hand side the intersection with $\Co_\ell(\Omega)$, as it can be easily seen using Proposition \ref{prop:useless}.
\end{rem}

At this point one can ask what would happen if we find and optimal subspace $S^1 \in C_\ell(\R^d)$ for approximating the data $\F$ with subspaces of $C_\ell(\R^d)$ and then project this subspace onto $V$ to obtain a subspace $S^1_V= \Proj_V (S^1).$ 

Let $S^*=S(\Phi)$ be an optimal subspace for approximating the modified data $\Proj_V \F$ with subspaces that belong to $C_\ell(\R^d).$ By Proposition \ref{eu:1}, we can choose $S^*\subseteq S (\Proj_V\F) = \Proj_V S(\F) \subseteq V.$ Recall that $V=PW_\Omega.$ Since $S^* \subseteq V$, by Theorem \ref{proj},
$$
\E(\F, S^*)= \E(\Proj_V\F, S^*) + \E(\F, V)\,.
$$
Since $S^1_V \subseteq V$, we can see that
\begin{equation} \label{eq8}
	\E(\F,S^*)=\E(\Proj_V\F,S^*) + \E(\F,V) \leq \E(\Proj_V\F,S^1_V) + \E(\F,V) = \E(\F,S^1_V).
\end{equation}
In some cases, we could have equality in \eqref
{eq8}. For example, this happens if all the data $\F$ is in $V$ or in $V^\perp$. On the other hand, we will show in the next example that the inequality in \eqref{eq8} may be strict.

\begin{example}{\phantom{\!\!\!}}{}\footnote{We thank In\'es Armendariz for the idea behind this example.} \label{example}
Let $\F=\{ f_1, f_2 \}$ with
$$
\wh {f_1} = \chi_{[-1,0)} + 2 \chi_{[1,2)}\,, \qquad \qquad \wh{f_2} = -\chi_{[-1,0)} + 2 \chi_{[1,2)}\,.
$$
Let $V=PW_{[-1,1)}.$ We compute first the right hand side of \eqref{eq8} with $\Z$-invariant subspaces of length 1 in $L^2(\R).$ To find $S^1 \in \Co_1(\R)$ we need to find $\varphi_0 \in L^2(\R)$ such that
$$
\E(\F,S(\varphi_0)) = \underset{\varphi \in L^2(\R)} {\operatorname{min}} \, \E(\F,S(\varphi))\,.
$$
By Plancherel Theorem,
\begin{equation} \label{X1}
	\E(\F,S(\varphi))= \|\wh{f_1}- \wh{\Proj_{S(\varphi)}f_1}\|^2 + \|\wh{f_2}- \wh{\Proj_{S(\varphi)}f_2}\|^2\,.
\end{equation}
It can be seen, after a moment thought, that for $\varphi$ to minimize $\eqref{X1}$, ${\operatorname{supp} \wh {\varphi}}$ must be contained in $[-1,0)\cup [1,2).$ Among such functions, the smallest error is attained for $\varphi_0$ given by $\wh {\varphi_0} = \chi_{[1,2)}.$ Observe that $\wh {\Proj_{S(\varphi_0)} f_1} = 2\chi_{[1,2)}= \wh {\Proj_{S(\varphi_0)} f_2} = 2\chi_{[1,2)}$. Therefore, using \eqref{X1} we conclude 
$$
\E(\F, S(\varphi_0))= \|\chi_{[-1,1)}\|^2 +\|-\chi_{[-1,1)}\|^2 = 2.
$$
We have shown that $S^1 = S(\varphi_0)$. Hence, $S^1_V = \Proj_V(S^1) = \Proj_V (S(\varphi_0)) = \{ 0\}$, since ${\operatorname{supp} \wh {\varphi_0}} \cap [-1,1) = \emptyset.$ In this case, the right hand side of \eqref{eq8} becomes
$$
\E(\F, S^1_V) = \|f_1\|^2 + \|f_2\|^2 =  10.
$$
We compute now the left hand side of \eqref{eq8}. First find $S^*= S(\varphi_1)$ such that 
$$
\E(\Proj_V\F,S^*) = \underset{S(\varphi) \subset V} {\operatorname{min}} \, \E(\Proj_V \F,S(\varphi))\,.
$$
By taking $\wh {\varphi_1} = \chi_{[-1,0)}$ (observe that $\varphi_1 \in V = PW_{[-1,1)}),$
\begin{align*} 
	\E(\Proj_V \F,S(\varphi_1)) &= \|\wh{\Proj_V f_1}- \left(\Proj_{S(\varphi_1)} \Proj_V f_1\right)\wh{\ }\|^2 + \|\wh{\Proj_V f_2}- \left(\Proj_{S(\varphi_1)} \Proj_V f_2\right)\wh{\ }\|^2\,\\
	& = \|\chi_{[-1,0)} - \chi_{[-1,0)}\|^2 + \| - \chi_{[-1,0)} + \chi_{[-1,0)}\|^2 = 0\,.
\end{align*}
This shows that $S^* = S(\varphi_1).$ By Theorem \ref{proj}, the left hand side of \eqref{eq8} becomes 
\begin{align*} 
	\E(\F,S^*) &= \E(\F,V) = \|\wh{f_1}- \wh{\Proj_V  f_1}\|^2 + \|\wh{f_2}- \wh{\Proj_{V}f_2}\|^2 \\
	& = \|2\chi_{[1,2)}\|^2 + \| 2\chi_{[1,2)} \|^2 = 8 \,.
\end{align*}
\end{example}

\subsection{Approximation by Shift-Invariant Spaces with Extra invariance} 
There are shift-invariant spaces $S_\Lambda(\Psi)$ in $L^2(\R^d)$ that are also invariant under translations by the elements of a closed subgroup $M$ of $\R^d$ containing $\Lambda.$ In this case we say that $S_\Lambda(\Psi)$ has extra-invariance. Denote by $\mathcal V_M$ the set of such subspaces of $L^2(\R^d)$.

Since in Theorem 3.1 of \cite{CM2016} the existence of ''best shift-invariant spaces that have extra-invariance'' has been proven, if we want to require that the approximating vectors are in  $\Co_\ell(\Omega)$ we can apply the same proof of the previous theorem, since again, $\Proj_V S_\Lambda(\Phi)=S_\Lambda(\Proj_V(\Phi))$. Therefore we have the following theorem.
\begin{theo}\label{teo:B1}
Let $M$ be closed subgroup of $\R^d$ containing $\Lambda.$
Given data $\F = \{f_1, \dots, f_m\} \subset L^2(\R^d)$, and $\Omega \subset \R^d$, there exists a shift-invariant space $S_\Lambda(\Phi) \in \mathcal V_M  \cap \Co_\ell(\Omega)$ such that $\E(\F,S_\Lambda(\Phi)) \leq \E(\F,S_\Lambda(\Psi))$ for all $S_\Lambda(\Psi)\in \mathcal V_M$ and $\Psi=\{\psi_1,\dots,\psi_\ell\} \subset PW_{\Omega}$.
\end{theo}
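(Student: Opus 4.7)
The plan is to mimic the proof of Theorem \ref{teo:B} step by step, while keeping track of the additional $M$-invariance constraint at each stage. The crucial structural observation is that $V = PW_\Omega$ is invariant under every translation of $\R^d$, not merely under translations by $\Lambda$. Hence by part (ii) of Lemma \ref{op-commute}, $\Proj_V$ commutes not only with $t_\lambda$ for $\lambda \in \Lambda$ but also with $t_\mu$ for every $\mu \in M$. Consequently, if $S_\Lambda(\Psi) \in \mathcal V_M$, then $\Proj_V S_\Lambda(\Psi) = S_\Lambda(\Proj_V\Psi)$ is again both $\Lambda$-invariant and $M$-invariant, so $\Proj_V$ maps $\mathcal V_M$ into $\mathcal V_M \cap \Co_\ell(\Omega)$.

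Next, I would invoke Theorem 3.1 of \cite{CM2016} applied to the modified data $\Proj_V \F$: there exists $S^* = S_\Lambda(\Phi) \in \mathcal V_M$ of length at most $\ell$ that minimizes $\E(\Proj_V\F, S)$ over all $S \in \mathcal V_M$. Moreover, the same result (which is the $M$-extra-invariant analogue of Proposition \ref{eu:1}) guarantees that $S^*$ can be chosen inside $S_\Lambda(\Proj_V\F)$. Since $\Proj_V$ commutes with translations, $S_\Lambda(\Proj_V\F) = \Proj_V S_\Lambda(\F) \subseteq V$, so in fact $S^* \in \mathcal V_M \cap \Co_\ell(\Omega)$. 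In particular, $S^*$ also minimizes $\E(\Proj_V\F, S)$ over the smaller class $\mathcal V_M \cap \Co_\ell(\Omega)$.

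Finally, I would apply Theorem \ref{proj}: since every $S \in \mathcal V_M \cap \Co_\ell(\Omega)$ is contained in $V$, we have
\begin{equation*}
\E(\F, S) = \E(\Proj_V\F, S) + \E(\F, V),
\end{equation*}
and the additive constant $\E(\F, V)$ is independent of $S$. Therefore the minimizer of $\E(\Proj_V\F, S)$ over $\mathcal V_M \cap \Co_\ell(\Omega)$ coincides with the minimizer of $\E(\F, S)$ over the same class, and $S^*$ is the sought-after subspace.

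Nothing here is a real obstacle: the extra-invariance hypothesis is compatible with projection onto $V$ precisely because $V$ is translation-invariant under the \emph{full} group $\R^d$, which absorbs any closed subgroup $M \supseteq \Lambda$. The only point that requires a citation rather than a direct argument from the tools developed in Section \ref{proj-best} is the existence-and-location statement for the minimizer of $\E(\Proj_V\F, S)$ over $\mathcal V_M$, which is exactly the content of Theorem 3.1 of \cite{CM2016}.
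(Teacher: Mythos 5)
Your proposal is correct and follows essentially the same route as the paper: the paper's proof of Theorem \ref{teo:B1} is exactly to rerun the proof of Theorem \ref{teo:B} for the projected data, using that $PW_\Omega$ is invariant under all translations so that $\Proj_V S_\Lambda(\Psi)=S_\Lambda(\Proj_V\Psi)$ preserves the extra-invariance class $\mathcal V_M$, and citing Theorem 3.1 of \cite{CM2016} for the existence of the optimal extra-invariant subspace for $\Proj_V\F$. Your write-up just makes these steps explicit.
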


\subsection{Approximation by $\Gamma$-Invariant Spaces}  
In this section $\Gamma = \Lambda \rtimes G$ where $\Lambda$ is a full rank lattice of $\R^d$ and $G$ is a finite group that acts faithfully on $\R^d$ by continuous automorphisms that preserve $\Lambda.$
In this case the situation is slightly different, since the proof of an analogue of Theorem~\ref{teo:B} has to rely on the fact that $\Proj_V S_{\Gamma}(\Phi)=S_{\Gamma}(\Proj_V(\Phi))$. For an arbitrary set $\Omega$ this will not necessarily be true. By Lemma~\ref{op-commute}, the condition we need to impose on $\Omega$ is  that it is invariant under the action of the group $G$, that is $g\Omega = \Omega, \forall g \in G$.

With this extra condition we have again $\Proj_V S_{\Gamma}(\Phi)=S_{\Gamma}(\Proj_V(\Phi))$. Now, using that in Theorem 5.2 of \cite{BCHM2020} the existence of ''best  $\Gamma$-invariant spaces'' has been proven, if we want to require that the approximating vectors are in  $\Co_\ell(\Omega)$ we can apply the same proof of Theorem~\ref{teo:B}, to obtain, the following Theorem.
\begin{theo}\label{teo:B2}
Given data $\F = \{f_1, \dots, f_m\} \subset L^2(\R^d)$, $\Gamma = \Z^d \rtimes G$, with $G$ being a finite group of that acts faithfully on $\R^d$ by continuous automorphisms that preserve $\Z^d$, and $\Omega \subset \R^d$ such that $\Omega$ is invariant under the action of $G$, there exists a $\Gamma$-invariant space $S_{\Gamma}(\Phi)$, such that
$S_{\Gamma}(\Phi)\in \Co_\ell(\Omega)$ and $\E(\F,S_{\Gamma}(\Phi)) \leq \E(\F,S_{\Gamma}(\Psi))$ for all sets $\Psi=\{\psi_1,\dots,\psi_\ell\} \subset PW_{\Omega}$.
\end{theo}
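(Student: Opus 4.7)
The plan is to follow the blueprint of the proof of Theorem \ref{teo:B}, replacing shift-invariance by $\Gamma$-invariance and using the $G$-invariance of $\Omega$ to handle the rotation part of the action. The crucial preliminary step is to verify the identity $\Proj_V S_{\Gamma}(\Phi) = S_{\Gamma}(\Proj_V \Phi)$, after which the rest of the argument is a near-transcription of Theorem~\ref{teo:B}.

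First, I would check that $\Proj_V$ commutes with every operator $\tr{\lambda}\rot{g}$, $(\lambda,g)\in\Gamma$. Translation invariance of $V=PW_\Omega$ is automatic for any bounded $\Omega$, and part (ii) of Lemma~\ref{op-commute} yields $\Proj_V \tr{\lambda} = \tr{\lambda}\Proj_V$ for all $\lambda\in\Lambda$. For the rotations, a direct Fourier-side computation gives $\rot{g}(PW_\Omega)=PW_{g\Omega}$, so the assumption $g\Omega=\Omega$ for every $g\in G$ is \emph{exactly} what is needed to conclude $\rot{g}(V)=V$, and hence, via Lemma~\ref{op-commute}(ii), $\Proj_V \rot{g}=\rot{g}\Proj_V$. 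Combining these two commutation properties one obtains, for any $\Phi=\{\phi_1,\dots,\phi_\ell\}$,
\[
\Proj_V S_{\Gamma}(\Phi) = \ol{\vsp}\{\tr{\lambda}\rot{g}\Proj_V\phi_i\} = S_{\Gamma}(\Proj_V \Phi) \in \Co_\ell(\Omega).
\]

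Next, I would invoke Theorem 5.2 of \cite{BCHM2020} to obtain an optimal $\Gamma$-invariant subspace $S^{*}\in\Co_\ell(\R^d)$ for the projected data $\Proj_V\F$. By Proposition~\ref{eu:1} this $S^{*}$ may be chosen inside $S_{\Gamma}(\Proj_V\F)$, which by the preceding identity equals $\Proj_V S_{\Gamma}(\F)\subseteq V$; therefore $S^{*}\in\Co_\ell(\Omega)$. In particular $S^{*}$ minimizes $\E(\Proj_V\F,S)$ over the smaller class $\Co_\ell(\Omega)$ as well.

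Finally, since $S^{*}\subseteq V$, Theorem~\ref{proj} applies, and I would close the argument with the chain
\begin{align*}
\E(\F,S^{*}) &= \E(\Proj_V\F,S^{*})+\E(\F,V) \\
&= \underset{S\in\Co_\ell(\Omega)}{\min}\,\E(\Proj_V\F,S)+\E(\F,V) = \underset{S\in\Co_\ell(\Omega)}{\min}\,\E(\F,S),
\end{align*}
which gives the desired optimality against every $S_\Lambda(\Psi)$ generated by $\Psi\subset PW_\Omega$. The only genuinely new ingredient compared with Theorem~\ref{teo:B} is the observation that $G$-invariance of $\Omega$ is precisely the condition ensuring $\rot{g}(V)=V$; so I expect this verification to be the sole non-routine step, since without it the projected data need not generate a $\Gamma$-invariant subspace inside $V$ and the whole reduction collapses.
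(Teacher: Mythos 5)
Your proposal is correct and follows essentially the same route as the paper: the paper likewise reduces everything to the single new observation that $G$-invariance of $\Omega$ gives $\Proj_V S_{\Gamma}(\Phi)=S_{\Gamma}(\Proj_V\Phi)$ via Lemma~\ref{op-commute}, and then repeats the proof of Theorem~\ref{teo:B} using Theorem 5.2 of \cite{BCHM2020} and Proposition~\ref{eu:1}. (Minor slip only: in your last sentence the competitor subspaces should be written $S_{\Gamma}(\Psi)$, not $S_{\Lambda}(\Psi)$.)
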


\section{Best set $\Omega$}  \label{omega-opt}

\subsection{Shift-Invariant case}
Since we are looking to find a compact set that {\em best} fits the data in $\R^d$, in the sense that the Fourier Transform of the data is inside the set, without loss of generality, we can assume that the data already is in a huge $PW_{[-N,N]^d}$.

\begin{lem}\label{lem:bestPW}
Let $\varphi : \R^d \to \R_{\geq 0}$ be a measurable function, and denote by $\Omega$ the function from $\R_{\geq 0}$ to the measurable sets of $\R^d$ given by
$$
\Omega(t) = \left\{\xi \in \R^d : \varphi (\xi) > t\right\}.
$$
For any $M > 0$, set $t^M = \sup\{t : |\Omega(t)| > M\}$ and $t_M = \inf\{t : |\Omega(t)| < M\}$.
Let $\Omega_M \subset \R^d$ be any measurable set of measure $|\Omega_M| = M$ satisfying
\begin{equation}\label{eq:inclusions}
\Omega(t_M) \subset \Omega_M \subset \Omega(t^M).
\end{equation}
Then
$$
\max_{E \subset \R^d \,:\, |E| = M} \int_{E} \varphi(\xi) d\xi = \int_{\Omega_M} \varphi(\xi) d\xi.
$$
\end{lem}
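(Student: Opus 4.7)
The plan is to prove this via the classical bathtub principle, reducing to a pointwise comparison in $t$ by the layer-cake formula. Since $\varphi\geq 0$, Fubini gives for every measurable $E\subset\R^d$
$$
\int_E \varphi(\xi)\,d\xi = \int_0^\infty |E\cap\Omega(t)|\,dt,
$$
using that $\{\xi:\varphi(\xi)>t\}=\Omega(t)$. Applying this both to an arbitrary $E$ with $|E|=M$ and to $E=\Omega_M$, it suffices to establish the pointwise inequality $|E\cap\Omega(t)|\leq|\Omega_M\cap\Omega(t)|$ for a.e.\ $t\geq 0$, and then integrate.

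To obtain this pointwise bound I would split $[0,\infty)$ into two ranges determined by comparing $|\Omega(t)|$ with $M$, exploiting the obvious monotonicity that $t\mapsto\Omega(t)$ is decreasing (larger threshold gives smaller superlevel set). On the range $t\geq t^M$, monotonicity gives $\Omega(t)\subset\Omega(t^M)$, which by the hypothesis \eqref{eq:inclusions} lies inside $\Omega_M$; hence $|\Omega_M\cap\Omega(t)|=|\Omega(t)|\geq|E\cap\Omega(t)|$. On the range $t\leq t_M$, monotonicity and \eqref{eq:inclusions} give $\Omega_M\subset\Omega(t_M)\subset\Omega(t)$, so $|\Omega_M\cap\Omega(t)|=|\Omega_M|=M\geq|E|\geq|E\cap\Omega(t)|$. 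The two ranges together cover $[0,\infty)$ because $t^M\leq t_M$: since $t\mapsto|\Omega(t)|$ is decreasing, the set $\{t:|\Omega(t)|>M\}$ lies entirely to the left of $\{t:|\Omega(t)|<M\}$, so their sup and inf respect this order.

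I do not anticipate any real obstacle here; the argument is essentially a repackaging of the bathtub principle. The only care required is bookkeeping on the possible plateau $[t^M,t_M]$, on which $|\Omega(t)|=M$ and both set inclusions in the previous paragraph hold simultaneously (up to null sets) so the pointwise bound is trivial there. The boundary values $t^M$ and $t_M$ themselves contribute a measure-zero set in $t$ and so do not affect the integral. Integrating the pointwise inequality in $t$ then yields $\int_E\varphi\leq\int_{\Omega_M}\varphi$ for every $E$ of measure $M$, which is the desired maximality.
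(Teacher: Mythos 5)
Your overall strategy (layer-cake reduction to the pointwise inequality $|E\cap\Omega(t)|\le|\Omega_M\cap\Omega(t)|$ for a.e.\ $t$, then integration in $t$) is sound and genuinely different from the paper's argument, which proceeds by contradiction by comparing the values of $\varphi$ on $E\setminus\Omega_M$ and on $\Omega_M\setminus E$. However, your case analysis rests on a misreading of the hypothesis \eqref{eq:inclusions}. Since $t^M\le t_M$ and $t\mapsto\Omega(t)$ is decreasing, $\Omega(t^M)$ is the \emph{larger} of the two level sets, and \eqref{eq:inclusions} sandwiches $\Omega_M$ as $\Omega(t_M)\subset\Omega_M\subset\Omega(t^M)$. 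Your first case asserts that $\Omega(t^M)$ ``by the hypothesis lies inside $\Omega_M$'', and your second that $\Omega_M\subset\Omega(t_M)$; both are the reverse of what \eqref{eq:inclusions} provides (and together they would force all three sets to coincide). If you repair the cases by using the inclusions the right way round, you obtain $\Omega(t)\subset\Omega(t_M)\subset\Omega_M$ only for $t\ge t_M$, and $\Omega_M\subset\Omega(t^M)\subset\Omega(t)$ only for $t\le t^M$; these two ranges no longer cover $[0,\infty)$, the plateau $(t^M,t_M)$ is left out, and that is precisely where the work lies. Your closing remark that on the plateau ``both inclusions hold simultaneously up to null sets'' is in fact true, but it is not trivial bookkeeping and you give no argument for it.

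The missing ingredient is a continuity-of-measure observation: $\Omega(t^M)=\bigcup_{n}\Omega(t^M+1/n)$ is an increasing union of sets each of measure at most $M$ (by definition of $t^M$), so $|\Omega(t^M)|\le M$; combined with the hypothesized inclusion $\Omega_M\subset\Omega(t^M)$ and $|\Omega_M|=M$, this forces $\Omega(t^M)=\Omega_M$ up to a null set. With that in hand your dichotomy does close up: for $t\ge t^M$ one has $\Omega(t)\subset\Omega(t^M)=\Omega_M$ a.e., hence $|\Omega_M\cap\Omega(t)|=|\Omega(t)|\ge|E\cap\Omega(t)|$, while for $t<t^M$ one has $\Omega_M\subset\Omega(t^M)\subset\Omega(t)$, hence $|\Omega_M\cap\Omega(t)|=M\ge|E\cap\Omega(t)|$; integrating in $t$ finishes the proof (and $t_M$ is never needed). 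As written, though, both pointwise bounds are justified by inclusions that the hypothesis does not give, so the proof has a genuine gap.
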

\begin{proof}
Observe first that, since $t \mapsto |\Omega(t)|$ is a monotone decreasing function which goes to zero at infinity, both $t^M$ and $t_M$ exist and are finite for every $M > 0$, with $t^M \leq t_M$. 
Assume now, by contradiction, that for a given $M > 0$ there exists a measurable $E \subset \R^d$, with $|E| = M$, such that
$$
\int_{E} \varphi(\xi) d\xi > \int_{\Omega_M} \varphi(\xi) d\xi.
$$
Denoting by $E_0 = E \setminus (E \cap \Omega_M)$ and by $\Omega_0 = \Omega_M \setminus (\Omega_M \cap E)$, this reads
$$
\int_{E_0} \varphi(\xi) d\xi > \int_{\Omega_0} \varphi(\xi) d\xi
$$
where $|E_0| = |\Omega_0|$ and $E_0 \cap \Omega_0 = \emptyset$. This implies in particular that there is a subset $E_+ \subset E_0$ of positive measure such that
\begin{equation}\label{eq:contradiction}
\inf_{E_+} \varphi > \varphi(\xi) \textnormal{ for a.e. } \xi \in \Omega_0. 
\end{equation}
Now, since by the right hand side inclusion of \eqref{eq:inclusions} we have $\varphi(\xi) > t^M$ for a.e. $\xi \in \Omega_0$, we have obtained that $\varphi(\xi) > t^M$ for a.e. $\xi \in E_+$, that is, $E_+ \subset \Omega(t^M)$. However, by the left hand side inclusion of \eqref{eq:inclusions} we have that the largest values in $\Omega(t^M)$ are already attained in $\Omega_0$, hence contradicting \eqref{eq:contradiction}.
\end{proof}

\begin{theo}\label{bestPW2}
Let $\F = \{f_i\}_{i = 1}^m \subset L^2(\R^d)$, let $\displaystyle \varphi(\xi) = \sum_{i = 1}^m |\wh{f}_i(\xi)|^2$ and, for any $M > 0$, let $\Omega_M$ be as in \eqref{eq:inclusions}.
Then
\begin{equation}\label{eq:bestPW}
\min_{E \subset \R^d \,:\, |E| = M} \sum_{i = 1}^m \|\wh{f_i}(1 - \chi_E)\|^2 = \sum_{i = 1}^m \|\wh{f_i}(1 - \chi_{\Omega_M})\|^2. 
\end{equation}
\end{theo}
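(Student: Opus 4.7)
The statement is essentially a direct corollary of Lemma \ref{lem:bestPW}, once the minimization is recast as a maximization of the total energy of $\varphi$ on $E$. So my plan is to reduce to that lemma via a short rewriting step.

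First, I would note that each $f_i \in L^2(\R^d)$ gives $\wh{f_i} \in L^2(\R^d)$, so $\varphi = \sum_{i=1}^m |\wh{f_i}|^2$ is a non-negative measurable function with $\|\varphi\|_{L^1(\R^d)} = \sum_{i=1}^m \|f_i\|^2 < \infty$, which legitimates splitting the integral. Then, for any measurable $E \subset \R^d$,
$$
\sum_{i=1}^m \|\wh{f_i}(1 - \chi_E)\|^2 = \int_{\R^d \setminus E} \sum_{i=1}^m |\wh{f_i}(\xi)|^2 \, d\xi = \int_{\R^d} \varphi(\xi)\, d\xi - \int_E \varphi(\xi)\, d\xi.
$$
Since the first integral on the right is a constant independent of $E$, minimizing the left-hand side over sets $E$ of measure $M$ is equivalent to maximizing $\int_E \varphi(\xi)\, d\xi$ over the same class of sets.

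Next, I would apply Lemma \ref{lem:bestPW} to exactly this $\varphi$, which produces
$$
\max_{E \subset \R^d \,:\, |E| = M} \int_E \varphi(\xi)\, d\xi = \int_{\Omega_M} \varphi(\xi)\, d\xi,
$$
where $\Omega_M$ is any set satisfying \eqref{eq:inclusions}. Substituting back,
$$
\min_{E \subset \R^d \,:\, |E| = M} \sum_{i=1}^m \|\wh{f_i}(1 - \chi_E)\|^2 = \int_{\R^d} \varphi - \int_{\Omega_M} \varphi = \int_{\R^d \setminus \Omega_M} \varphi(\xi)\, d\xi,
$$
and unfolding $\varphi$ once more recovers the right-hand side of \eqref{eq:bestPW}.

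There is no real obstacle here beyond the lemma itself, whose proof has already been carried out: the content of the theorem is the translation between the projection-error formulation (sum of $L^2$-tails of the Fourier transforms outside $E$) and the layer-cake/super-level-set optimization to which the lemma applies. The only thing worth double-checking in the write-up is that one may take any $\Omega_M$ satisfying \eqref{eq:inclusions}, so the minimum is attained but not necessarily by a unique set — precisely as the lemma accommodates.
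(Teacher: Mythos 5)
Your proposal is correct and follows essentially the same route as the paper's own proof: both rewrite $\sum_i \|\wh{f_i}(1-\chi_E)\|^2$ as a constant minus $\int_E \varphi$ (the paper via the identity $|1-\chi_E|^2 = 1-\chi_E$) and then invoke Lemma \ref{lem:bestPW} to maximize $\int_E\varphi$ over sets of measure $M$. No gaps.
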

\begin{proof}
The identity \eqref{eq:bestPW} reads
$$
\sum_{i = 1}^m \int_{\R^d} |\wh{f_i}(\xi)|^2 \, |1 - \chi_{\Omega_M}(\xi)|^2 d\xi \leq \sum_{i = 1}^m \int_{\R^d} |\wh{f_i}(\xi)|^2 \, |1 - \chi_{E}(\xi)|^2 d\xi
$$
for all $E$ of measure $M$. That is, using that $|1 - \chi_{E}(\xi)|^2 = 1 - \chi_{E}(\xi)$,
$$
\int_{\Omega_M} \left(\sum_{i = 1}^m |\wh{f_i}(\xi)|^2\right) d\xi \geq \int_{E} \left(\sum_{i = 1}^m |\wh{f_i}(\xi)|^2\right) d\xi \quad \forall \, |E| = M.
$$
Hence, the conclusion follows by Lemma \ref{lem:bestPW}
\end{proof}

\begin{rem}
Lemma \ref{lem:bestPW} and Theorem \ref{bestPW2} are still valid if $\R^d$ is replaced by a $\sigma$-finite measure space $(X,\mu)$.
Furthermore,  Theorem \ref{bestPW2} also shows that  actually there exist a set $E_0$, with  $|E_0| = M$  that maximize  $\{\int_{E} \varphi :E \subset \R^d \,:\, |E| = M\} $.
\end{rem}
 
 \subsection{$\Gamma-$invariance case}\label{omega-opt-gamma}

When we optimise with  $\Gamma-$invariant subspaces, we require  the Paley-Wiener space $PW_\Omega$ to be $\Gamma$-invariant.
Since any Paley-Wiener space is translation invariant and $\Gamma = \Z^d \rtimes G,$ we only need to require  $PW_\Omega$ to be $G-$invariante that is,
$\Omega$ to be invariant under the action of $G$ (i.e. $g\,\Omega=\Omega $ for all $ g\in G$).

Choose a positive real number $M$ and denote by $\M(G,M)=\{\Omega\subset \R^d:|\Omega| = M, \text{ and }g\,\Omega=\Omega, \; \forall  g\in G\}$.
Thus, given any $M > 0$ we need
 to find a measurable set $\Omega\in \M(G,M) $ that  maximizes $\int_\Omega \varphi $ over the class $\M(G,M)$. 

For this,
 let $\F = \{f_i\}_{i = 1}^m \subset L^2(\R^d)$, 
\begin{equation}\label{eq:smallphi}
\varphi(\xi) = \sum_{i = 1}^m |\wh{f}_i(\xi)|^2
\end{equation}
and 
\begin{equation}\label{eq:bigphi}
\Phi_G(\xi) = \sum_{g\in G} \sum_{i = 1}^m |\wh{f}_i(g\xi)|^2.
\end{equation}
Define the equivalence relation $\sim_G$  in $\R^d$  by $x \sim_G y$ if and only if there exist $g \in G$ such that $x=gy$ .
Choose $D\subset \R^d$ to be a measurable section of the quotient  $\R^d/{\sim_G}$.  Thus, $\R^d = \bigcup_{g\in G} g D$, with the union being disjoint.

The existence of a measurable section $D$ can be obtained in the following way: In Proposition 4.2 in \cite{BCHM2020} it is proved that there exists a measurable set $P \subset \R^d$ of finite and positive Lebesgue measure such that, up to a measure zero set, 
$$\R^d = \bigcup_{k \in \Z^d} \Big(\bigcup_{g \in G} gP \Big) + k,$$
with the union being disjoint.
Using this we have,
$$\R^d = \bigcup_{k \in \Z^d} \Big(\bigcup_{g \in G} gP \Big) + k = \bigcup_{k \in \Z^d} \bigcup_{g \in G} g(P +  g^{-1}k) = \bigcup_{g\in G}g \left(\bigcup_{k \in \Z^d}(P + g^{-1}k)\right)
. $$
Since $g\Z^d = \Z^d$ for all $g \in G$, we have $\bigcup_{k \in \Z^d}(P + g^{-1}k)= \bigcup_{k \in \Z^d}(P + k)$ and so $\R^d = \bigcup_{g\in G}g D$ with $D = \bigcup_{k \in \Z^d}(P + k)$ which is clearly measurable.

\begin{theo} Let $G$ be a finite group that leaves $\Z^d$ invariant. Let $M > 0$ and $\M \equiv \M(G,M) = \{ \Omega \subset \R^d: |\Omega|=M \  \mbox{and} \ \ g\Omega= \Omega \ \ \forall g \in G\}$. For $\varphi, \Phi_G$ as in \eqref{eq:smallphi} and \eqref{eq:bigphi}, assume that
$$
\Omega_0 \in A =  {\operatorname{argmax}} \left\{ \int _\Omega \varphi \ : \ \Omega \in \M \right\}
$$
and
$$
\Sigma_0 \in B = \operatorname{argmax} \left\{\int_{\Sigma} \Phi_G:\Sigma \subseteq D, |\Sigma|=M/(\#G)\right \}. $$   Then

$$\int_{\Omega_0}\varphi  = \int_{\Sigma_0} \Phi_G.$$
\end{theo}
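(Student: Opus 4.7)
The plan is to set up a measure-preserving bijection between $\M(G,M)$ and the family $\mathcal{D}_M := \{\Sigma \subseteq D : |\Sigma| = M/\#G\}$, under which the functional $\Omega \mapsto \int_\Omega \varphi$ is transformed into the functional $\Sigma \mapsto \int_\Sigma \Phi_G$. Once this bijection is in place, the suprema over the two classes are equal, and since both are attained, we obtain the claimed identity.

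First I would note that each $g \in G$ acts on $\R^d$ by a continuous automorphism preserving $\Z^d$, so it is represented by an integer matrix; since $G$ is finite, each such matrix has finite order and hence determinant $\pm 1$. Consequently Lebesgue measure is $G$-invariant and, for any measurable $A \subseteq \R^d$, $|gA| = |A|$ and $\int_{gA} h(\xi)\, d\xi = \int_A h(g\xi)\, d\xi$ for every nonnegative measurable $h$.

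Next, using the measurable section $D$ constructed in the preamble, any $\Omega \in \M(G,M)$ decomposes (up to measure zero) as the disjoint union $\Omega = \bigsqcup_{g \in G} g(\Omega \cap D)$. Setting $\Sigma := \Omega \cap D$ gives $|\Sigma| = M/\#G$, so $\Sigma \in \mathcal{D}_M$. Conversely, given $\Sigma \in \mathcal{D}_M$, the set $\bigsqcup_{g \in G} g\Sigma$ lies in $\M(G,M)$, and the two constructions are mutually inverse modulo null sets. Applying the change of variables above,
\begin{align*}
\int_\Omega \varphi(\xi)\, d\xi
&= \sum_{g \in G} \int_{g\Sigma} \varphi(\xi)\, d\xi
 = \sum_{g \in G} \int_\Sigma \varphi(g\eta)\, d\eta \\
&= \int_\Sigma \Big(\sum_{g \in G} \sum_{i=1}^m |\wh{f_i}(g\eta)|^2 \Big) d\eta
 = \int_\Sigma \Phi_G(\eta)\, d\eta.
\end{align*}

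Therefore the two optimisation problems have the same value, and in particular $\int_{\Omega_0}\varphi = \int_{\Sigma_0} \Phi_G$. The main technical point to check carefully is the disjointness and measurability of the decomposition $\Omega = \bigsqcup_{g \in G} g(\Omega \cap D)$ for an arbitrary $G$-invariant $\Omega$; this follows from the analogous property of $D$ itself recalled from \cite{BCHM2020} together with $G$-invariance of $\Omega$. Everything else reduces to a routine change of variables and the observation that existence of $\Omega_0$ and $\Sigma_0$ (guaranteed by hypothesis via Lemma~\ref{lem:bestPW} applied on $\R^d$ and on $D$ respectively) ensures that the two maxima, and not merely suprema, coincide.
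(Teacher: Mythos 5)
Your proposal is correct and follows essentially the same route as the paper: decompose a $G$-invariant $\Omega$ as $\bigcup_{g\in G} g(\Omega\cap D)$ using the measurable section $D$, change variables to convert $\int_\Omega \varphi$ into $\int_{\Omega\cap D}\Phi_G$, and pass between the two optimisation classes in both directions (the paper phrases this as two inequalities rather than an explicit bijection, but the content is identical). Your added remark that the matrices in $G$ have determinant $\pm 1$, hence preserve Lebesgue measure, is a correct elaboration of the paper's one-line justification.
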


\begin{proof}
	For $\Omega_0\in A$, consider $\Sigma = \Omega_0 \cap D$. Since $\Omega_0 \in \M$,
	\begin{equation} \label{Omega}
	\Omega_0 = ( \bigcup_{g\in G} gD )\cap \Omega_0 = \bigcup_{g\in G} (gD \cap \Omega_0) = \bigcup_{g\in G} (gD \cap g\Omega_0) =  \bigcup_{g\in G} g \Sigma\,.
	\end{equation}
    Therefore, 
	\begin{equation}\label{integrals}
	\int_{\Omega_0} \varphi(\xi)\, d\xi = \int_{\underset{g\in G} {\operatorname{\cup}}g\Sigma} \; \varphi(\xi)\, d\xi = 
	\sum_{g\in G}\int_{\Sigma} \varphi(g\omega) \,d\omega =  \int_{\Sigma} \Phi_G (\omega) \, d\omega.
	\end{equation}
	Since $G$ preserves the lattice $\Z^d$ it also preserves the Lebesgue measure of $\R^d$. From \eqref{Omega} we obtain $|\Sigma|= |\Omega_0|/(\#G) = M/(\#G).$ Hence, as $\Sigma_0 \in B$, we have
	$$
	\int_{\Omega_0} \varphi(\xi)\, d\xi = \int_{\Sigma} \Phi_G (\omega) \, d\omega \leq \int_{\Sigma_0} \Phi_G (\omega) \, d\omega.
	$$
	
	Now, for $\Sigma_0 \in B$, define $\displaystyle \Omega= \bigcup_{g\in G} g\Sigma_0$. The same computation as in \eqref{integrals} gives 
	$$
	\int_{\Sigma_0} \Phi_G (\omega) \, d\omega = \int_{\Omega} \varphi(\xi)\, d\xi \,.
	$$
	Since $|\Omega| = (\#G) |\Sigma_0| = M$ with $g\Omega = \Omega$ for all $g\in G$, and as $\Omega_0 \in A$, we conclude
\[
	\int_{\Sigma_0} \Phi_G (\omega) \, d\omega = \int_{\Omega} \varphi(\xi)\, d\xi \leq  \int_{\Omega_0} \varphi(\xi)\, d\xi\,. \qedhere
\]
\end{proof}

\section{Lattices} \label{lattices}

Given a full rank lattice $\Lambda$ in $\R^d$, let $\Omega_\Lambda$ be a fundamental set of $\Lambda^\perp$ (the annihilator of $\Lambda$), that is a set such that $\Omega_\Lambda \approx \wh{\R^d} / \Lambda^\perp$. For $f \in L^2(\R^d)$, denote by
\begin{equation} \label{ii}
\iso_\Lambda f (\omega) = \{\wh{f}(\omega + \gamma)\}_{\gamma \in \Lambda^\perp} \, , \quad \omega \in \Omega_\Lambda\,,
\end{equation}
defining an isometric isomorphism $\iso_\Lambda : L^2(\R^d) \to L^2(\Omega_\Lambda,\ell_2(\Lambda^\perp))$ (see Proposition 1.2 in \cite{Bow2000} for the case $\Lambda = \Z^d$ and Proposition 3.3 in \cite{CP2010} for the general case). 

For a family $\F = \{f_i\}_{i = 1}^m \subset L^2(\R^d)$, denote also its Gramian matrix with respect to $\Lambda$ by
$$
(\G^\F_\Lambda)_{ij}(\omega) = \sum_{\gamma \in \Lambda^\perp} \wh{f_i}(\omega + \gamma) \ol{\wh{f_j}(\omega + \gamma)} \, , \quad \omega \in \Omega_\Lambda.
$$
We have the following elementary lemma whose proof is left to the reader. 
\begin{lem}\label{lem:AF}
Let $A \in \textrm{GL}_d(\R)$ and denote by $\wh{A} = (A^t)^{-1}$ and by $|A|=det(A)$.
\begin{itemize}
\item[i)] Let $\Lambda \subset \R^d$ be a full rank lattice and let $\Omega_\Lambda$ be a fundamental set for $\wh{\R^d}/\Lambda^\perp$. Then $A\Lambda$ is a full rank lattice and $\wh{A}\Omega_\Lambda$ is a fundamental set for $\wh{\R^d}/\wh{A}\Lambda^\perp$.
\item[ii)] Let $f \in L^2(\R^d)$ and denote by $D_A f(x) = |A|^\frac12 f(Ax)$. Then
$$
\wh{D_A f} = D_{\wh{A}} \, \wh{f} .
$$
\end{itemize}
\end{lem}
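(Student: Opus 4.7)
The plan is to verify both statements by direct computation, exploiting only the definitions of $\Lambda^\perp$, fundamental set, and Fourier transform. Neither part requires any substantial technique; the lemma is really a bookkeeping exercise to track where the transpose/inverse enters.

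For part (i), since $A \in \textrm{GL}_d(\R)$ is a linear isomorphism of $\R^d$, the image $A\Lambda$ of a full rank lattice under $A$ is again a full rank lattice. The crux is to identify $(A\Lambda)^\perp$. By definition, $\eta \in (A\Lambda)^\perp$ iff $\langle A\lambda, \eta\rangle \in \Z$ for every $\lambda \in \Lambda$, which rewrites as $\langle \lambda, A^t\eta\rangle \in \Z$ for every $\lambda \in \Lambda$, that is $A^t\eta \in \Lambda^\perp$, or equivalently $\eta \in (A^t)^{-1}\Lambda^\perp = \wh{A}\Lambda^\perp$. Hence $(A\Lambda)^\perp = \wh{A}\Lambda^\perp$. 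Because $\wh{A}$ is a bijection of $\wh{\R^d}$, the (essentially) disjoint tiling $\wh{\R^d} = \bigcup_{\gamma \in \Lambda^\perp}(\Omega_\Lambda + \gamma)$ transports under $\wh{A}$ to the disjoint tiling
$$
\wh{\R^d} \;=\; \bigcup_{\gamma \in \Lambda^\perp}\bigl(\wh{A}\Omega_\Lambda + \wh{A}\gamma\bigr) \;=\; \bigcup_{\delta \in \wh{A}\Lambda^\perp}\bigl(\wh{A}\Omega_\Lambda + \delta\bigr),
$$
which is precisely the statement that $\wh{A}\Omega_\Lambda$ is a fundamental set for $\wh{\R^d}/(A\Lambda)^\perp$.

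For part (ii), I would perform the standard change of variable $y = Ax$ in the defining integral. Starting from
$$
\wh{D_A f}(\omega) \;=\; |A|^{1/2}\int_{\R^d} f(Ax)\, e^{-2\pi i \langle x, \omega\rangle}\,dx,
$$
the substitution $x = A^{-1}y$ contributes a Jacobian factor $|A|^{-1}$, and the exponent becomes $-2\pi i \langle A^{-1}y, \omega\rangle = -2\pi i \langle y, (A^t)^{-1}\omega\rangle = -2\pi i \langle y, \wh{A}\omega\rangle$. Collecting factors yields $\wh{D_A f}(\omega) = |A|^{-1/2}\wh{f}(\wh{A}\omega)$. Since $|\wh{A}| = \det((A^t)^{-1}) = |A|^{-1}$, we have $|\wh{A}|^{1/2} = |A|^{-1/2}$, so this coincides with $D_{\wh{A}}\wh{f}(\omega) = |\wh{A}|^{1/2}\wh{f}(\wh{A}\omega)$.

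There is no real obstacle here; the only mildly delicate point is keeping track of where the transpose and inverse appear, which is exactly the reason the notation $\wh{A} = (A^t)^{-1}$ is introduced in the statement and is what makes both formulas align.
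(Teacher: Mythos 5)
Your proof is correct and is exactly the routine verification the authors intend: the paper explicitly leaves this lemma's proof to the reader, and your computation of $(A\Lambda)^\perp=\wh{A}\Lambda^\perp$, the transported tiling, and the change of variables $y=Ax$ with the Jacobian bookkeeping is the standard argument. The only cosmetic remark is that the Jacobian factor is really $|\det A|$, so the paper's convention $|A|=\det(A)$ should be read as the absolute value of the determinant, as you implicitly do.
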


\begin{prop}\label{dilation}
Let $\Lambda \subset \R^d$ be a full rank lattice and let $\Omega_\Lambda$ be a fundamental set for $\wh{\R^d}/\Lambda^\perp$. Let $\F = \{f_i\}_{i = 1}^m \subset L^2(\R^d)$, and denote by $D_A\F = \{D_A f_i \}_{i = 1}^m$.
\begin{itemize}
\item[a)] for all $f \in L^2(\R^d)$ we have
$$
\iso_{A\Lambda} f (\wh{A}\omega) = |A|^\frac12 \iso_\Lambda (D_A f)(\omega) \, , \quad \omega \in \Omega_\Lambda.
$$
\item[b)] the following covariance relationship for the Gramian holds:
$$
\G_{A\Lambda}^\F(\wh{A}\omega) = |A| \G_\Lambda^{D_{A}\F}(\omega) \, , \quad \omega \in \Omega_\Lambda.
$$
\end{itemize}
\end{prop}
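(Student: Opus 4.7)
The plan is to reduce part (a) to two ingredients: (i) the algebraic identity $(A\Lambda)^\perp = \wh{A}\Lambda^\perp$, which lets us reindex the fiber of $\iso_{A\Lambda}$, and (ii) Lemma \ref{lem:AF}(ii), which converts $\wh{f}\circ \wh{A}$ into $\wh{D_A f}$ up to a factor of $|A|^{1/2}$. Part (b) will then be essentially immediate from (a), because the Gramian entries are inner products of the fibers of $\iso$.

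First I would verify $(A\Lambda)^\perp = \wh{A}\Lambda^\perp$: a character $\gamma'$ annihilates $A\Lambda$ iff $\langle \gamma', A\lambda\rangle = \langle A^t \gamma', \lambda\rangle \in \Z$ for all $\lambda\in\Lambda$, i.e.\ $A^t\gamma'\in\Lambda^\perp$, i.e.\ $\gamma' = (A^t)^{-1}\eta = \wh{A}\eta$ for some $\eta\in\Lambda^\perp$. Plugging this into the definition \eqref{ii} of $\iso_{A\Lambda}$ and setting the argument to $\wh{A}\omega$ yields
$$
\iso_{A\Lambda} f(\wh{A}\omega) = \{\wh{f}(\wh{A}\omega+\wh{A}\gamma)\}_{\gamma\in\Lambda^\perp} = \{\wh{f}(\wh{A}(\omega+\gamma))\}_{\gamma\in\Lambda^\perp}.
$$
Now by Lemma \ref{lem:AF}(ii), $\wh{D_A f}=D_{\wh{A}}\wh{f}$, so $\wh{D_A f}(\xi) = |\wh{A}|^{1/2}\wh{f}(\wh{A}\xi) = |A|^{-1/2}\wh{f}(\wh{A}\xi)$, which rearranges to $\wh{f}(\wh{A}\xi) = |A|^{1/2}\wh{D_A f}(\xi)$. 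Substituting $\xi=\omega+\gamma$ gives
$$
\iso_{A\Lambda} f(\wh{A}\omega) = |A|^{1/2}\{\wh{D_A f}(\omega+\gamma)\}_{\gamma\in\Lambda^\perp} = |A|^{1/2}\iso_\Lambda(D_A f)(\omega),
$$
which is part (a).

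For part (b), I would observe that $(\G_\Lambda^\F)_{ij}(\omega) = \langle \iso_\Lambda f_i(\omega),\iso_\Lambda f_j(\omega)\rangle_{\ell^2(\Lambda^\perp)}$ (reindexing the series over $\Lambda^\perp$ produces precisely the Gramian entry). Applying (a) to both $f_i$ and $f_j$ and using sesquilinearity,
$$
(\G_{A\Lambda}^\F)_{ij}(\wh{A}\omega) = \langle |A|^{1/2}\iso_\Lambda(D_Af_i)(\omega),|A|^{1/2}\iso_\Lambda(D_Af_j)(\omega)\rangle = |A|\,(\G_\Lambda^{D_A\F})_{ij}(\omega).
$$

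The only subtlety is bookkeeping with determinants: since $D_A$ is unitary on $L^2(\R^d)$, the factor $|A|$ in the paper must be read as $|\det A|$, and then $|\wh{A}|=|A|^{-1}$, giving the required $|A|^{1/2}$ in (a) and $|A|$ in (b); no deeper obstruction appears.
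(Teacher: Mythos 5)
Your proposal is correct and follows essentially the same route as the paper: both rest on the identity $(A\Lambda)^\perp = \wh{A}\Lambda^\perp$ to reindex the fiber, and on Lemma \ref{lem:AF}(ii) to convert $\wh{f}\circ\wh{A}$ into $|A|^{1/2}\wh{D_Af}$. The only organizational difference is that you prove (a) first and deduce (b) from it via the inner-product form of the Gramian, whereas the paper computes (b) directly and leaves (a) to the reader; the underlying computation is identical.
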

\begin{proof}
Both parts are consequence of item $ii)$ of Lemma \ref{lem:AF}. We only proof $b).$
 For $\omega \in \Omega_\Lambda$ we have
\begin{align*}
\Big(\G_{A\Lambda}^\F(\wh{A}\omega) \Big)_{ij}
& = \sum_{\lambda \in \wh{A}\Lambda^\perp} \wh{f_i}(\wh{A}\omega + \lambda) \ol{\wh{f_j}(\wh{A}\omega + \lambda)}\\
& = \sum_{\gamma \in \Lambda} \wh{f_i}(\wh{A}(\omega + \gamma)) \ol{\wh{f_j}(\wh{A}(\omega + \gamma))}\\
& = |A| \sum_{\gamma \in \Lambda^\perp} D_{\wh{A}}\wh{f_i}(\omega + \gamma) \ol{D_{\wh{A}}\wh{f_j}(\omega + \gamma)} \\
& = |A| \sum_{\gamma \in \Lambda^\perp} \wh{D_{A}f_i}(\omega + \gamma) \ol{\wh{D_{A}f_j}(\omega + \gamma)}=  |A| \Big(\G_{\Lambda}^{D_{A}\F } (\omega) \Big)_{ij}. \qedhere
\end{align*}
\end{proof}

%

Recall that, for a general full rank lattice $\Lambda$ of $\R^d$, we denote by $\E^*(\F,\Lambda, \ell)$ the minimum error of approximation of the data $\F$ by a $\Lambda$-invariant subspace with at most $\ell$ generators.

\begin{prop}\label{Lattices}
For any full rank lattice $\Lambda$ of $\R^d$, any set of data $\F=\{f_1, \dots, f_m \} \subset L^2(\R^d)$, and any $A\in GL_n(\R)$,
$$
\E^*(\F,A\Lambda,\ell)= \E^*(D_A\F, \Lambda,\ell).
$$
\end{prop}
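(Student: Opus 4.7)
The plan is to show that the dilation $D_A$ is a unitary operator on $L^2(\R^d)$ which sets up a bijection between the class of $A\Lambda$-invariant subspaces of length at most $\ell$ and the class of $\Lambda$-invariant subspaces of length at most $\ell$, and that this bijection preserves the approximation error between the corresponding data sets $\F$ and $D_A \F$.

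First I would record two elementary facts about $D_A$. Unitarity follows from a change of variables: $\|D_A f\|^2 = \int |A| |f(Ax)|^2 dx = \|f\|^2$. For the intertwining with translations, a direct computation gives
$$
D_A t_\mu f (x) = |A|^{\frac12} f(Ax - \mu) = |A|^{\frac12} f(A(x - A^{-1}\mu)) = t_{A^{-1}\mu} D_A f(x),
$$
so that $D_A\, t_\mu = t_{A^{-1}\mu}\, D_A$ for every $\mu \in \R^d$. Restricting to $\mu \in A\Lambda$, the exponent $A^{-1}\mu$ ranges over $\Lambda$, hence $D_A$ conjugates the group of $A\Lambda$-translations onto the group of $\Lambda$-translations. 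In particular, for any finite family $\Phi = \{\varphi_1,\dots,\varphi_\ell\} \subset L^2(\R^d)$, we have $D_A \big(S_{A\Lambda}(\Phi)\big) = S_\Lambda(D_A \Phi)$, and this is a bijection between $A\Lambda$-invariant subspaces of length at most $\ell$ and $\Lambda$-invariant subspaces of length at most $\ell$ (with inverse induced by $D_{A^{-1}} = D_A^{-1}$).

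Next, since $D_A$ is unitary, Lemma \ref{op-commute}(i) yields $D_A \Proj_S = \Proj_{D_A(S)} D_A$ for every closed subspace $S$. Consequently, for each $f_i \in \F$,
$$
\|f_i - \Proj_S f_i\|^2 = \|D_A f_i - D_A \Proj_S f_i\|^2 = \|D_A f_i - \Proj_{D_A(S)} D_A f_i\|^2,
$$
so summing over $i$ gives $\E(\F, S) = \E(D_A \F, D_A(S))$ for every closed subspace $S$. Taking the infimum over $S$ in the class of $A\Lambda$-invariant subspaces of length at most $\ell$, and using the bijection from the previous step, we conclude
$$
\E^*(\F, A\Lambda, \ell) = \inf_{S} \E(\F, S) = \inf_{S} \E(D_A \F, D_A(S)) = \E^*(D_A \F, \Lambda, \ell),
$$
which is the desired identity.

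The only nonobvious step is the intertwining relation $D_A t_\mu = t_{A^{-1}\mu} D_A$, but this is a one-line change of variables; once it is in place the rest is a bookkeeping exercise using the unitarity of $D_A$ and Lemma \ref{op-commute}. Proposition \ref{dilation} is not strictly needed here, since the argument proceeds entirely on the spatial side via the intertwining, although the same identity could alternatively be derived on the Fourier side from the Gramian covariance in Proposition \ref{dilation}(b).
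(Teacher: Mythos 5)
Your proposal is correct and follows essentially the same route as the paper's own proof: both establish the intertwining $D_A t_{A\gamma} = t_\gamma D_A$ (hence $D_A(S_{A\Lambda}(\Phi)) = S_\Lambda(D_A\Phi)$), invoke Lemma \ref{op-commute}(i) to get $D_A\Proj_S = \Proj_{D_A(S)}D_A$, and use unitarity of $D_A$ to conclude $\E(\F,S)=\E(D_A\F,D_A(S))$ before optimizing over the class. The only cosmetic difference is that you phrase the final step as a single infimum over a bijection of classes, whereas the paper proves the two inequalities separately.
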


\begin{proof}
Let $S= S_{A\Lambda}(\{ \psi_1, \dots, \psi_\ell  \})$ be a general $A\Lambda$-invariant subspace of $L^2(\R^d)$ of length $\ell$. Let $S' =S_\Lambda (\{ D_A \psi_1, \dots, D_A\psi_\ell  \})$ be the $\Lambda$-invariant subspace generated by $\{ D_A \psi_1, \dots, D_A\psi_\ell  \}$. By part $i)$ of Lemma \ref{op-commute},
$$
D_A \Proj_S = \Proj_{D_A(S)} D_A\,.
$$
Since for any $\psi_j$, $j=1,2, \dots, \ell$, and any $\gamma \in \Lambda$,
$$
D_A T_{A\gamma}\psi_j(x) = |A|^{1/2}\psi_j(A(x-\gamma)) = T_\gamma D_A \psi_j(x)\,,
$$
we conclude that $D_A(S)=S'.$ Therefore,
$$
D_A \Proj_S = \Proj_{S'} D_A\,.
$$
It follows that,
\begin{align*}
\E(\F,S) & = \sum_{i=1}^m \| f_i- \Proj_S f_i\|^2 =
\sum_{i=1}^m \| D_A f_i- D_A\Proj_S f_i\|^2 \\
& = \sum_{i=1}^m \| D_A f_i- \Proj_{S'} D_A f_i\|^2 = \E(D_A \F, S')\,. 
\end{align*}
This shows that for any $A\Lambda$-invariant subspace $S$ of $L^2(\R^d)$ of length $\ell$ there is a $\Lambda$-invariant subspace of $L^2(\R^d)$, namely $S'$, such that $\E(\F, S) = \E(D_A \F, S')$. Therefore, $\E^*(D_A F, \Lambda, \ell) \leq \E^*(\F, A\Lambda, \ell).$  The reverse inequality is proved similarly, since any $\Lambda$-invariant subspace $V =S_\Lambda(\{ \varphi_1, \dots, \varphi_\ell\})$ of $L^2(\R^d)$ of length $\ell$ produces and $A\Lambda$ invariant subspace $V' = S_{A\Lambda} (\{ D_{A^{-1}}\varphi_1, \dots, D_{A^{-1}}\varphi_\ell \})= D_{A^{-1}}(V)$ for which $\E(D_A \F, V)= \E(F, V').$
\end{proof}

\begin{rem}
A different proof of Proposition \ref{Lattices} can be given using part b) of Proposition \ref{dilation} and Theorem 2.3 in \cite{ACHM2007} (generalized to include full-rank lattices of $\R^d$ in Theorem 5.2 in \cite{BCHM2020}).
\end{rem}

\section{Examples} \label{examples}

Let $\F=\{f_1, \dots, f_m\}$ be a finite set of data in $L^2(\R^d)$ and let $\Lambda$ be a full rank lattice of $\R^d.$ Proposition \ref{Lattices} shows that the best error of approximation of the modified data $D_A \F = \{ D_A f_1, \dots, D_A f_m \}$ by $\Lambda$-invariant subspaces of length $\ell$ coincides with the best error or approximation of the original data $\F$ by $A\Lambda$-invariant subspaces also with length $\ell$.

The situation is different if we try to compare the best error of approximation of the same data with subspaces that are invariant by two different lattices. We will show with one and two dimensional examples that the comparison depends on how the lattices relate to each other and also how the data behaves with respect to the lattices.

The simplest situation is when we consider the lattice $\Z^d$ and a full rank lattice $\Lambda$ containing $\Z^d$.
The set
$$
\{ T_\gamma \psi_i: \gamma \in \Lambda, i=1, \dots, \ell \}\,,
$$
clearly contains the set
$$
\{ T_k \psi_i:  k\in \Z^d, i=1, \dots, \ell\}\,.
$$
Therefore, if $\Psi = \{ \psi_1, \dots, \psi_\ell\},$
\begin{equation}\label{inclusion}
	S_{\Z^d}(\Psi) \subseteq S_\Lambda (\Psi).
\end{equation}
For a given function $f\in L^2(\R^d)$, the distance between $f$ and each one of the subspaces in \eqref{inclusion} is smaller for the bigger subspace. Therefore, for any data set $\F$,
$$
\E(\F, S_\Lambda(\Psi)) \leq \E(\F, S_{\Z^d}(\Psi))\,.
$$
Minimizing over all collections $\Psi$ of $\ell$ functions of $L^2(\R^d)$ we deduce
\begin{equation} \label{inequality}
	\E^* (\F, \Lambda, \ell) \leq \E^*(\F, \Z^d, \ell)\,.
\end{equation}


\subsection{One dimensional examples}
Rewriting \eqref{inequality} for $d=1$ and $N \in \N$, we have
\begin{equation} \label{Inequality2}
\E^*(\F, \Z/N, \ell) \leq \E^*(\F,\Z, \ell)\,.
\end{equation}
In this section we shall show that, if $N\geq 2$, in some situations the above inequality is strict, but in others is not, depending on the data $\F$ we want to approximate.

In the next examples we shall use the following result: 
\begin{lem}\label{characterization}
Let $\Lambda$ be a full rank lattice in $\R^d$ and $\psi \in L^2(\R^d)\,.$
\begin{itemize}
\item[a)] If $f\in S_\Lambda(\psi)$, there exists a measurable $\Lambda^\perp$-periodic function $m$ in $\R^d$ such that $\widehat f(\omega) = m(\omega) \wh \psi(\omega)$ a.e. $\omega \in \R^d.$
\item[b)] If $m$ is a measurable $\Lambda^\perp$-periodic function in $\R^d$ such that $m\wh \psi \in L^2(\R^d)$, the function $f$ defined by $\widehat f(\omega) = m(\omega) \wh \psi(\omega)$ a.e. $\omega \in \R^d$ satisfies $f\in S_\Lambda(\psi).$ 
\end{itemize}
\end{lem}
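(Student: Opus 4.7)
The plan is to reduce both implications to the fiberization map $\iso_\Lambda$ from \eqref{ii} together with the density of trigonometric polynomials in a weighted $L^2$-space on $\Omega_\Lambda$.

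First I would observe that for $\lambda\in\Lambda$ and $\gamma\in\Lambda^\perp$ we have $\langle\lambda,\gamma\rangle\in\Z$, so the character $\omega\mapsto e^{-2\pi i\langle\lambda,\omega\rangle}$ is $\Lambda^\perp$-periodic, and the relation $\wh{T_\lambda\psi}(\omega) = e^{-2\pi i\langle\lambda,\omega\rangle}\wh\psi(\omega)$ yields $\iso_\Lambda(T_\lambda\psi)(\omega) = e^{-2\pi i\langle\lambda,\omega\rangle}\iso_\Lambda\psi(\omega)$. Setting $\sigma(\omega) := \sum_{\gamma\in\Lambda^\perp}|\wh\psi(\omega+\gamma)|^2 = \|\iso_\Lambda\psi(\omega)\|_{\ell^2(\Lambda^\perp)}^2$, a straightforward periodization gives the key fiber identity
$$
\|m\wh\psi\|_{L^2(\R^d)}^2 = \int_{\Omega_\Lambda}|m(\omega)|^2\sigma(\omega)\,d\omega
$$
for every measurable $\Lambda^\perp$-periodic $m$. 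In particular, the condition $m\wh\psi\in L^2(\R^d)$ is equivalent to $m\in L^2(\Omega_\Lambda,\sigma\,d\omega)$.

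For part (b), given such an $m$, I would approximate it in $L^2(\Omega_\Lambda,\sigma\,d\omega)$ by trigonometric polynomials $P_n(\omega) = \sum_{\lambda}c_{n,\lambda}e^{-2\pi i\langle\lambda,\omega\rangle}$ supported on finite subsets of $\Lambda$; the fiber identity then upgrades this to $\|(m-P_n)\wh\psi\|_{L^2(\R^d)}\to 0$, and inverse Fourier transforming exhibits $f$ as the $L^2$-limit of the finite combinations $\sum_\lambda c_{n,\lambda}T_\lambda\psi$, whence $f\in S_\Lambda(\psi)$. For part (a), I would reverse the argument: picking finite combinations of $\Lambda$-translates of $\psi$ whose Fourier transforms $P_n\wh\psi$ converge to $\wh f$ in $L^2(\R^d)$, the same fiber identity forces $(P_n)$ to be Cauchy in $L^2(\Omega_\Lambda,\sigma\,d\omega)$, and its limit $m$, extended $\Lambda^\perp$-periodically to $\R^d$, satisfies $\wh f = m\wh\psi$ a.e.\ (trivially on $\{\wh\psi = 0\}$, where the limit $\wh f$ must also vanish a.e., and via an a.e.-convergent subsequence on the complement).

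The only non-routine ingredient is the density of trigonometric polynomials in $L^2(\Omega_\Lambda,\sigma\,d\omega)$, which I view as the main technical point to articulate carefully; it follows from $\sigma\in L^1(\Omega_\Lambda)$ combined with Stone--Weierstrass on the compact quotient $\wh{\R^d}/\Lambda^\perp$, so once set up the argument is short.
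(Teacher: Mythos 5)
Your argument is correct, and it is essentially the standard fiberization proof of this characterization — the paper itself gives no proof, deferring to Theorem 2.14 of de Boor--DeVore--Ron, whose argument runs along the same lines (periodization identity plus density of trigonometric polynomials in the weighted $L^2$ space over the fundamental domain). The two points you flag — that $m\wh\psi\in L^2(\R^d)$ is equivalent to $m\in L^2(\Omega_\Lambda,\sigma\,d\omega)$, and that density of the characters $e^{-2\pi i\langle\lambda,\cdot\rangle}$, $\lambda\in\Lambda$, holds because $\sigma\in L^1(\Omega_\Lambda)$ makes $\sigma\,d\omega$ a finite Borel measure on the compact quotient — are exactly the right ones, and your handling of the set $\{\wh\psi=0\}$ (equivalently, defining $m$ arbitrarily where $\sigma=0$) closes the only gap that typically arises in part (a).
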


This result can be found in Theorem 2.14 in \cite{dDR1994} for the case $\Lambda = \Z^d$. Their proof can be modified to give the Lemma.

\begin{example} \label{example1}
	Let $\F =\{f_1, f_2 \}$ where $\wh f_1 = \chi_{[0, 1/2)}$ and $\wh f_2 = \chi_{[1/2, 1)}$. Take $\varphi\in L^2(\R)$ given by $\wh \varphi = \chi_{[0,1)}$. Let
	$$
	m_1(\omega)= \sum_{k\in \Z} \chi_{[0, 1/2)}(\omega + k) \quad \text{and} \quad 	m_2(\omega)= \sum_{k\in \Z} \chi_{[1/2, 1)}(\omega + k)\,.
	$$
	The functions $m_j, j=1,2$ are measurable and $\Z$-periodic. Moreover, $\wh f_1 = m_1 \wh \varphi$ and $\wh f_2 = m_2 \wh \varphi$. By $b)$ of Lemma \ref{characterization} both $f_1$ and $f_2$ belong to $S_\Z (\varphi)$. Hence, $\E(\F,S_\Z (\varphi)) = 0$ and also $\E^*(\F, \Z, 1) = 0.$ By \eqref{Inequality2}, $\E^*(\F, \Z/2, 1) = 0$ and \eqref{Inequality2} is an equality in this case.
\end{example}
    	
\begin{example} \label{example2}
	Let $\F =\{f_1, f_2 \}$ where $\wh f_1 = \chi_{[0, 1/2)}$ and $\wh f_2 = \chi_{[1, 3/2)}$. Take $\varphi\in L^2(\R)$ given by $\wh \varphi = \chi_{[0,2)}$. Let
	$$
	m_1(\omega)= \sum_{k\in \Z} \chi_{[0, 1/2)}(\omega + 2k) \quad \text{and} \quad 	m_2(\omega)= \sum_{k\in \Z} \chi_{[1, 3/2)}(\omega +2k)\,.
	$$
The functions $m_j, j=1,2$ are measurable and $2\Z$-periodic. Moreover, $\wh f_1 = m_1 \wh \varphi$ and $\wh f_2 = m_2 \wh \varphi$. By $b)$ of Lemma \ref{characterization} both $f_1$ and $f_2$ belong to the $S_{\Z/2} (\varphi)$. As in the previous example, $\E^*(\F, \Z/2,1)=0.$
	
	On the other hand $\E^*(\F, \Z, 1) > 0$. To prove this assume that  $\E^*(\F, \Z, 1) = 0$. By Theorem 2.1 in \cite{ACHM2007} there exists $\varphi \in L^2(\R)$ such that $\E(F,S_\Z(\varphi)) = \E^*(\F, \Z, 1)=0$. For this to hold we must have that $f_1$ and $f_2$ must belong to $S_\Z(\varphi)$. By part $a)$ of Lemma \ref{characterization}, there exist $\Z$-periodic functions $\eta_1$ and $\eta_2$ such that for $\omega \in \R$,
	\begin{equation} \label{Eq18}
		\chi_{[0,1/2)}(\omega)= \wh {f_1}(\omega)=\eta_1(\omega) \wh \varphi(\omega) \quad \text{and} \quad \chi_{[1,3/2)}(\omega)= \wh {f_2}(\omega)=\eta_2(\omega) \wh \varphi(\omega)\,.
	\end{equation}
    For $\omega \in [0,1/2)$, the first  equation in \eqref{Eq18} gives $\wh \varphi(\omega) \neq 0$ for all $\omega \in [0,1/2).$ The second equation in \eqref{Eq18} produces $\eta_2(\omega)=0$ for  all $\omega \in [0,1/2).$ Since $\eta_2$ is $\Z$-periodic we conclude that $\eta_2(\omega)=0$ for $\omega \in [1,3/2).$ This contradicts the second equation in \eqref{Eq18}.
\end{example}

\begin{example} \label{example3}
	We will show that for each $h\in \R$  irrational there exists a set of data $\F_h\subset L^2(\R)$ such that
	$$
	0 = \E^*(\F_h, h\Z, 1) < \E^* (\F_h, \Z, 1)\,.
	$$
	Let $f= \chi_{[-1/2, 1/2]}$ and $\F_h =\{f, t_h f \}$. It is clear that $f$ and $t_h f$ belong to $S_{h\Z}(f)$, the $h\Z$-invariant subspace generated by $f$. Therefore $\E^*(\F_h, h\Z, 1)=0.$
	
	We want to show that $\E^*(\F_h, \Z, 1) > 0$. If  $\E^*(\F_h, \Z, 1) = 0$, by Theorem 2.1 in \cite{ACHM2007} there exists $\varphi \in L^2(\R)$ such that $\E(F_h,S_\Z(\varphi))= \E^*(\F_h, \Z, 1)= 0$. For this to hold we must have that $f$ and $t_h f$ must belong to $S_\Z(\varphi)$.  By part $a)$ of Lemma \ref{characterization}, there exist $\Z$-periodic functions $\eta_1$ and $\eta_2$ such that for a.e. $\omega \in \R$,
	$$
		 \wh {f}(\omega)=\eta_1(\omega) \wh \varphi(\omega) \quad \text{and} \quad  \wh {t_h f}(\omega)=\eta_2(\omega) \wh \varphi(\omega)\,.
	$$
	Since $\wh f (\omega) = \text{Sinc} (\omega)$ and $\wh{t_hf}(\omega) = e^{-2\pi i h \omega} \text{Sinc} (\omega)$, we have
	$$
	\text{Sinc} (\omega)=\eta_1(\omega) \wh \varphi(\omega) \quad \text{and} \quad  e^{-2\pi i h \omega} \text{Sinc}=\eta_2(\omega) \wh \varphi(\omega)\,.
	$$
	Let $\Omega$ be the set where $\text{Sinc} (\omega) \neq 0$ which satisfies $|\R \setminus \Omega|=0.$ On this set $\Omega$
	$$
	e^{-2\pi i h \omega} = \frac{\eta_2(\omega)}{\eta_1(\omega)}\,.
	$$
	This is impossible because the function in the right hand side of the above equation is $\Z$-periodic while the function in the left is not.
	
	Observe that the same proof works for any $f\in L^2(\R)$ such that the support of $\wh f$ is $\R$.
\end{example}

\subsection{Two dimensional examples with an isometry}

Now we will look at two examples in which we allow for different lattices - not necessarily a refinement of one another.
The examples show that lattices play a significant role in the computation of the error. 

In order to present them, we need to recall the notion of {\em range function} which was introduced by Helson and Lowdenslager in \cite{HL58, Hel1964} and developed by Bownik in \cite{Bow2000}.
Let $\Lambda$ be a full rank lattice in $\R^d$ and let $\Omega_{\Lambda} \subset \wh{\R^d}$ be a Borel section of $\wh{\R^d} / \Lambda^\perp$. Recall the definition of the isometric isomorphism $\T_{\Lambda}$ given in equation \eqref{ii}. 
A range function is a map
$$
\J : \Omega_{\Lambda} \to \{\textnormal{closed subspaces of } \ell_2(\Lambda^\perp)\} .
$$
A range function $\J$ is said to be measurable if the family $\pj_{\J(\omega)} \in \B(\ell_2(\Lambda^\perp))$ of orthogonal projections onto $\J(\omega)$ is measurable.

A closed subspace $V$ of $L^2(\R^d)$ is $\Lambda$-invariant if and only if there exists a measurable range function $\J$ such that
$$
V = \{ f\in L^2(\R^d): \T_{\Lambda} f(\omega) \in \J(\omega) \  \text{a.e} \  \omega \in \Omega_\Lambda \}.
$$
Moreover, if $V=S_\Lambda(\mathcal A)$ for some countable set $\mathcal A$ of $L^2(\R^d)$ the measurable range function associated to $V$ is given by
$$
\J(\omega) = \overline {\text{span} }\{ \T_\Lambda \varphi (\omega) : \varphi \in \mathcal A \}\, \quad \text{a.e} \ \omega \in \Omega_\Lambda\,.
$$
Good references for this result are Proposition 1.5 in \cite{Bow2000} and Theorem 3.10 in \cite{CP2010}.


Let $\ell(S_\Lambda(\mathcal A))$ denote the length of $S_\Lambda(\mathcal A)$, that is the smallest  number of generators of $S_\Lambda(\mathcal A)$. It that can be seen that  $$S_\Lambda(\mathcal{A}) = \esssup_{w \in \Omega_{\Lambda}} \text{dim}(\J(\omega)).$$


\begin{example} \label{example4}
Let $\Lambda = \Z^2$ and $\Lambda_1 = R\Z^2$ be a rotation of $\Z^2$ by $\displaystyle\frac{\pi}{6}$. Let $\gamma := \left(\frac{\sqrt{3}}{2}, \frac{1}{2}\right)$ and let $\Omega = [0,1)\times [0,1)$, and  $\Omega_1= R\Omega$ be fundamental domains for $\Lambda$ and $\Lambda_1$ respectively. Observe that since $R$ is unitary, the dual lattice of $\Lambda_1$ is itself. Let $B_1, B_2  \subset \Omega$ be the following circles (see Figure \ref{fig1}):
\begin{align*}
B_1 &= B\left((0,0),\frac{1}{25}\right) + \left(\frac{2}{25}, \frac{3}{10}\right)\\
B_2 &= B_1 + \gamma = B\left((0,0),\frac{1}{25}\right) + \left(\frac{\sqrt3}{2}+\frac{2}{25}, \frac{1}{2}+\frac{3}{10}\right).
\end{align*}
This example is based on the fact that $B_2 = B_1 + \gamma$, but for the lattice $\Z^2$ both belong to the same fundamental domain while for $R\Z^2$ they don't. That is, $B_1 \in \Omega \cap R\Omega$, while $B_2 \in \Omega \setminus R\Omega$.

\begin{figure}
\begin{center}
	\includegraphics[width=.8\textwidth]{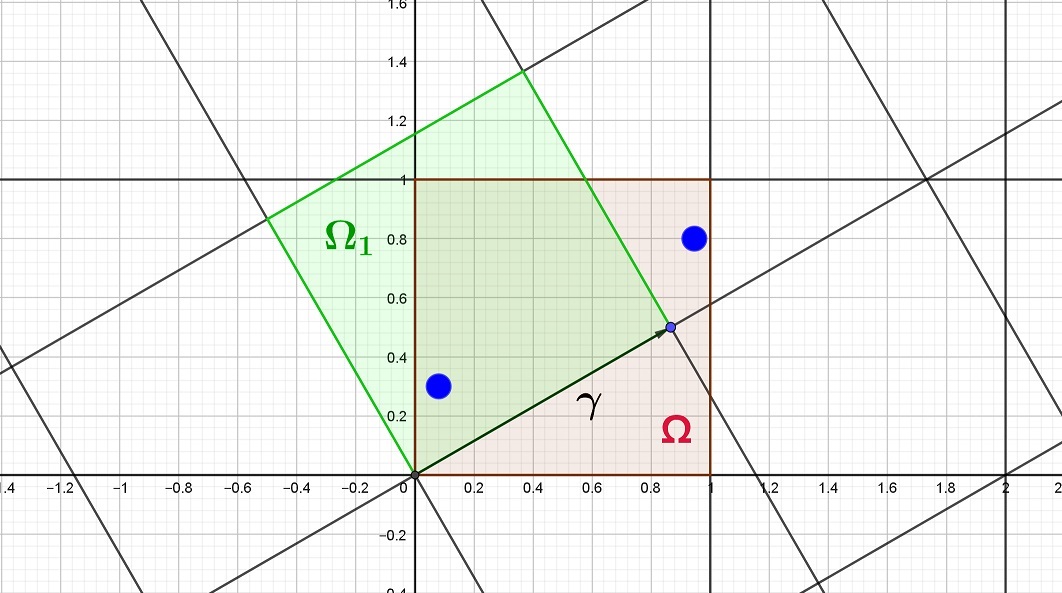}
\end{center}
\caption{The sets of Example 5.}\label{fig1}
\end{figure}

Let $\F = \{f_1, f_2\} \subset L^2(\R^2)$ be defined by $\wh{f_i} = \chi_{B_i}, \ i=1,2$. Hence, for the lattice $\Lambda=\ZZD$, 
$$ \T_\Lambda [f_1](\omega) = \left\{\begin{array}{ll} e_{(0,0)} & \omega \in B_1\\
0 &  \omega \in \Omega\setminus B_1 \end{array}\right. \qquad 
\T_\Lambda [f_2](\omega) = \left\{\begin{array}{ll} e_{(0,0)} & \omega \in B_2\\
0 & \omega \in \Omega \setminus B_2 \end{array}\right. . $$
This means that, for {\em a.e.} $\omega \in \Omega$, the dimension of $\J(\omega)$ is less or equal than 1, and the length of $S(\F)$ is $\ell(S(\F)) = 1$.

 If $\Phi = \{\varphi\}$ with 
$$\T_\Lambda [\varphi](\omega) = \left\{\begin{array}{ll} e_{(0,0)} & \omega \in B_1\cup B_2\\
0 & \omega \in \Omega \setminus (B_1\cup B_2)\end{array}\right. , $$
we have that $\|f_1 - \Proj_{S_\Lambda(\varphi)}(f_1)\| = \|f_2 - \Proj_{S_\Lambda(\varphi)}(f_2)\| = 0$ a.e. $\omega \in \Omega$ and hence $S(\varphi)$ approximates te data $\F$ perfectly.

However, if instead of the lattice $\Lambda=\ZZD$ we take the lattice $\Lambda_1 = R\ZZD$, then 
$$ \T_{\Lambda_1}[f_1](\omega) = \left\{\begin{array}{ll} e_{(0,0)} & \omega \in B_1\\
0 & \omega \in \Omega_1 \setminus B_1 \end{array}\right. \qquad 
\T_{\Lambda_1}[f_2](\omega) = \left\{\begin{array}{ll} e_{(1,0)} & \omega \in B_1\\
0 & \omega \in \Omega_1 \setminus B_1 \end{array}\right. $$
So, for $\omega \in B_1$, the dimension of $\J(\omega)$ is 2; hence tht length of $S_{\Lambda_1}(\F)$ is $2$ and therefore, if we want to approximate with a single function,  the error can never be $0$. Hence, $\E^*(\F, \Z^2,1)= \E^*(\F, R\Z^2,1).$
\end{example}

This example induces us to believe that to minimize the error of approximation we should search for the lattice $\Lambda$ that minimizes the length of $S_{\Lambda}(\F)$. As we show in the next example, this is not necessarily true.

\begin{example} \label{example5}


Let the lattices and fundamental domains be as in Example \ref{example4}, and let $\gamma_1 = \left(\frac{\sqrt{3}}{2}, \frac{1}{2}\right)$ and $\gamma_2 = 2\cdot\left(\frac{\sqrt{3}}{2}, \frac{1}{2}\right)$, 
Consider the following sets (see Figure \ref{fig2}):
\begin{align*}
B_1 &= B\left((0,0),\frac{1}{25}\right) + \left(\frac{2}{25}, \frac{3}{10}\right)\\
B_2 &= B_1+\gamma_1 \\
B_3 &=  B_1+\gamma_2 \\
Q_1 &= \text{square of side}\  \frac{1}{25} \text{with lower left vertex at}\ \left(\frac{1}{5},\frac{1}{25}\right)\\
Q_2 &= Q_1 + (1,0).
\end{align*}

\begin{figure}[h!]
\begin{center}
	\includegraphics[width=.8\textwidth]{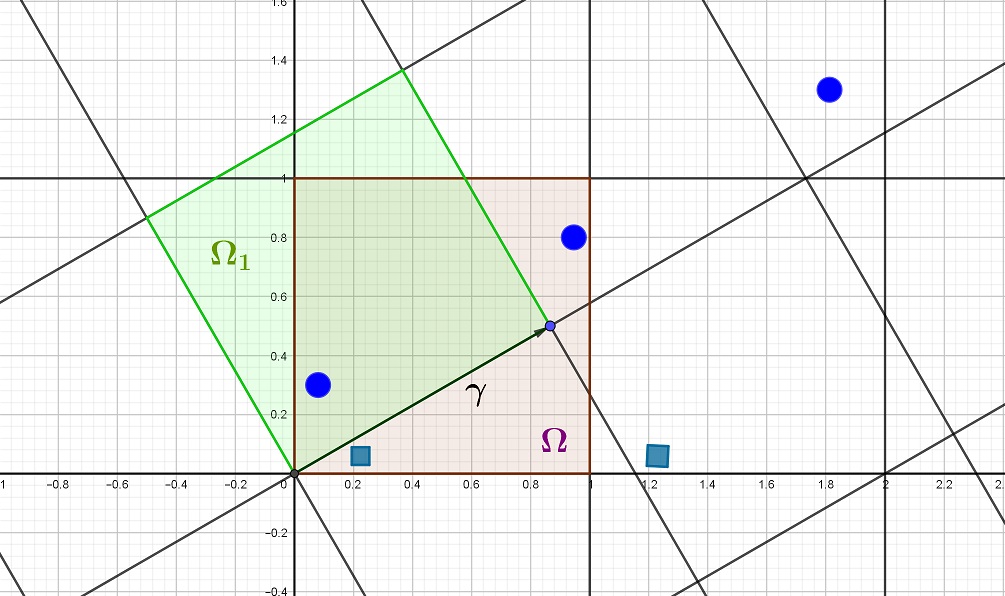}
\end{center}
\caption{The sets of Example 6.}\label{fig2}
\end{figure}

Let $\F = \{f_1, f_2, f_3, f_4, f_5\} \subset L^2(\R^2)$ be defined by 
$$
\begin{array}{c}
	\widehat{f_1} = \chi_{B_1} \, , \ 
	\widehat{f_2} = \chi_{B_1}+ \varepsilon\cdot \chi_{B_2} \, , \
	\widehat{f_3} = \chi_{B_1}+ \varepsilon\cdot \chi_{B_3} \, ,\\
	\widehat{f_4} = \chi_{Q_1} \, , \
	\widehat{f_5} = \chi_{Q_2} .
\end{array}
$$

Hence, for the lattice $\ZZD$, and $\Omega = [0,1)\times [0,1), $
\begin{align*}
\T_{\Z^2}[f_1](\omega) & = \left\{\begin{array}{ll} e_{(0,0)} & \omega \in B_1\\
0 & \omega \in \Omega \setminus B_1 \end{array}\right. \\
\T_{\Z^2}[f_2](\omega) & = \left\{\begin{array}{ll} e_{(0,0)} & \omega \in B_1\\
\varepsilon \cdot e_{(0,0)} & \omega \in B_2\\
0 & \omega \in \Omega \setminus (B_1\cup B_2)  \end{array}\right. \\
\T_{\Z^2}[f_3](\omega) & = \left\{\begin{array}{ll} e_{(0,0)} & \omega \in B_1\\
\varepsilon\cdot e_{(1,1)} & \omega \in B_3 - (1,1) \\
0 & \omega \in \Omega \setminus \big((B_3 - (1,1))\cup B_1 \big)\end{array}\right. \\
\T_{\Z^2}[f_4](\omega) & = \left\{\begin{array}{ll} e_{(0,0)} & \omega \in Q_1\\
0 & \omega \in \Omega \setminus Q_1  \end{array}\right. \\
\T_{\Z^2}[f_5](\omega) & = \left\{\begin{array}{ll} e_{(1,0)} & \omega \in Q_1\\
0 & \omega \in \Omega \setminus Q_1  \end{array}\right. 
\end{align*}

This means, that for almost every $\omega$ the dimension of $\J(\omega)$ is less or equal than 2, and  on $Q_1$ the dimension is $2$, so, the length of $S_{\Z^2}(\F)$ is 2.

Since $\T_{\Z^2}[f_4]$ and $\T_{\Z^2}[f_5]$ are orthogonal in $Q_1$, if we want to approximate with a single function, for $\omega \in Q_1$ the best we can do is to take the line right in the middle, so the best approximation error will be the area of $B_1$.

However, if instead of the lattice $\ZZD$ we take the lattice $\Lambda= R\ZZD$, where $R$ is a rotation of $\displaystyle \frac{\pi}{6}$, with fundamental domain $\Omega_1 = R \Omega$, 
\begin{align*}
\T_\Lambda [f_1](\omega) & = \left\{\begin{array}{ll} e_{(0,0)} & \omega \in B_1\\
0 & \omega \in \Omega_1 \setminus B_1  \end{array}\right. \\
\T_\Lambda [f_2](\omega) &= \left\{\begin{array}{ll} e_{(0,0)} + \varepsilon \cdot e_{(1,0)} & \omega \in B_1\\
0 & \omega \in \Omega_1 \setminus B_1  \end{array}\right. \\
\T_\Lambda [f_3](\omega) &= \left\{\begin{array}{ll} e_{(0,0)} + \varepsilon \cdot e_{(2,0)} & \omega \in B_1\\
0 & \omega \in \Omega_1 \setminus B_1  \end{array}\right. \\
\T_\Lambda [f_4](\omega) &= \left\{\begin{array}{ll} e_{(0,-1)} & \omega \in Q_1 + (-\frac12, \frac{\sqrt{3}}{2})\\
0 & \omega \in \Omega_1 \setminus (Q_1 + (-\frac12, \frac{\sqrt{3}}{2})) \end{array}\right. \\
\T_\Lambda[f_5](\omega) &= \left\{\begin{array}{ll} e_{(1,-1)} & \omega \in Q_1 - \gamma_1 + (-\frac12, \frac{\sqrt{3}}{2}) \\
0 & \omega \in \Omega_1 \setminus (Q_1 - \gamma_1 + (-\frac12, \frac{\sqrt{3}}{2}))  \end{array}\right.
\end{align*}

Hence, in $B_1$ we have three linearly independent vectors, and on every other point only one or none, so the length of $S_\Lambda(\F)$ is 3. But if we look at the vectors, and compute the error that occurs by approximating with a single line, the error will be $\varepsilon^2$ times de area of $B_1$.

So, the set $\F$ of this example can be better approximated in the second lattice, even though the length is bigger.
\end{example}

\newpage
\noindent
\textbf{Davide Barbieri}\\
Universidad Aut\'onoma de Madrid, 28049 Madrid, Spain\\
{\tt davide.barbieri@uam.es}

\vspace{1ex}
\noindent
\textbf{Carlos Cabrelli}\\
Universidad de Buenos Aires and 
IMAS-CONICET, Consejo  Nacional de Investigaciones Cient\'ificas y T\'ecnicas, 1428 Buenos Aires,  Argentina\\
{\tt carlos.cabrelli@gmail.com}

\vspace{1ex}
\noindent
\textbf{Eugenio Hern\'andez}\\
Universidad Aut\'onoma de Madrid, 28049 Madrid, Spain\\
{\tt eugenio.hernandez@uam.es}

\vspace{1ex}
\noindent
\textbf{Ursula Molter}\\
Universidad de Buenos Aires and 
IMAS-CONICET, Consejo  Nacional de Investigaciones Cient\'ificas y T\'ecnicas, 1428 Buenos Aires,  Argentina\\
{\tt umolter@dm.uba.ar}

\end{document}